\definecolor{grey}{rgb}{0.86, 0.86, 0.86}
\newcommand{\Ess}{{Ess}}
\newcommand{\ZZ}{{\mathbb Z}}
\newcommand{\frakG}{{\mathfrak G}}
\newcommand{\calF}{{\mathcal F}}
\newcommand{\calG}{{\mathcal G}}
\newcommand{\calT}{{\mathcal T}}
\newcommand{\scA}{{\mathscr A}}
\newcommand{\id}{{\operatorname{id}}}
\newcommand{\tG}{{\widetilde{G}}}
\newcommand{\FSVT}{{\calT}}
\newcommand{\val}{{val}}
\newtheorem{thm}{Theorem}[section]  
\newtheorem{lem}[thm]{Lemma}  
\newtheorem{prop}[thm]{Proposition} 
\newtheorem{df-pr}[thm]{Definition-Proposition}
\theoremstyle{definition} 
\newtheorem{defn}[thm]{Definition}
\newtheorem{rem}[thm]{Remark}
\newtheorem{exm}[thm]{Example}
\numberwithin{equation}{section} 
\begin{document}
\title{Factorial Flagged Grothendieck Polynomials}
\author{Tomoo Matsumura, Shogo Sugimoto}
\date{\today}

\maketitle
\begin{abstract}
We show that the \emph{factorial flagged Grothendieck polynomials} defined by flagged set-valued tableaux of Knutson--Miller--Yong \cite{KnutsonMillerYong} can be expressed by a Jacobi--Trudi type determinant formula, generalizing the work of Hudson--Matsumura \cite{HudsonMatsumura2}. In particular, we obtain an alternative proof of the tableau and determinant formulas of vexillary double Grothendieck polynomials, that have been obtained by Knutson--Miller--Yong \cite{KnutsonMillerYong} and Hudson--Matsumura \cite{HudsonMatsumura2} respectively. Moreover, we show that each factorial flagged Grothendieck polynomial can be obtained from a product of linear polynomials by applying $K$-theoretic divided difference operators. 

\end{abstract}

\section{Introduction}
The {\it (double) Grothendieck polynomials} were introduced by Lascoux and Sch\"utzenberger (\cite{Lascoux1}, \cite{LascouxSchutzenberger3}) and they represent the (torus-equivariant) $K$-theory classes of the structure sheaves of Schubert varieties in the flag varieties. Their combinatorial formula in terms of pipe dreams or rc graphs was obtained by Fomin--Kirillov  (\cite{GrothendieckFomin}, \cite{DoubleGrothendieckFomin}). When restricted to Grassmannian elements, or more generally, vexillary permutations, Knutson--Miller--Yong \cite{KnutsonMillerYong} expressed the associated double Grothendieck polynomials as \emph{factorial flagged Grothendieck polynomials} defined in terms of {flagged set-valued tableaux}. This can be regarded as a unification of the work of Wachs \cite{Wachs} and Chen--Li--Louck \cite{ChenLiLouck} on flagged tableaux and the work of Buch \cite{BuchLRrule} and McNamara \cite{McNamara} on set-valued tableaux. On the other hand, the first author, in the joint work \cite{HIMN} with Hudson, Ikeda and Naruse, obtained a determinant formula of the double Grothendieck polynomials associated to Grassmannian elements (cf. \cite{Matsumura2016}). Such explicit closed formula was later generalized also to the vexillary case in \cite{HudsonMatsumura2} and \cite{Anderson2016}. Motivated by these results, the first author \cite{MatsumuraFlagged} studied non-factorial (single) flagged Grothendieck polynomials in general and showed their determinant formula, beyond the ones given by vexillary permutations.

In this paper, we extend the results in \cite{MatsumuraFlagged} to the factorial (double) case.  Let $x=(x_i)_{i\in \ZZ_{>0}}$ and $b=(b_i)_{\ZZ_{>0}}$ be infinite sequences of variables and $\beta$ a formal variable. We denote $u\oplus v=u+v+\beta uv$ for variables $u$ and $v$. Following the work \cite{KnutsonMillerYong} and \cite{KnutsonMillerYong2} of Knutson--Miller--Yong, we define the {\it factorial flagged Grothendieck polynomial} associated to a partition $\lambda=(\lambda_1\geq \cdots \geq\lambda_r>0)$ with a \emph{flagging} $f=(0<f_1\leq \cdots \leq f_r)$ by
\[
G_{\lambda,f}(x|b):=\sum_{T\in \FSVT(\lambda,f)}\beta^{|T|-|\lambda|}[x|b]^T 
\]
where $\FSVT(\lambda,f)$ denotes the set of flagged set-valued tableaux of the flagged partition $(\lambda,f)$. Here we denoted
\[
[x|b]^T = \prod_{e\in T} \left(x_{\val(e)} \oplus b_{\val(e)-r(e)+c(e)}\right)
\]
where the product runs over all entries $e$ of $T$, $\val(e)$ denotes the numeric value of $e$, and $r(e)$ (resp. $c(e)$) denotes the row (resp. column) index of $e$. The main result of this paper is as follows.

\vspace{2mm}


\noindent{\bf Main Theorem} (Theorem \ref{mainthm}). For a flagged partition $(\lambda,f)$ of length $r$, we have
\[
G_{\lambda,f}(x|b)=\det\left(\sum_{s=0}^{\infty}\binom{i-j}{s}\beta^s\calG_{\lambda_i+j-i+s}^{[f_i,f_i+\lambda_i-i]}\right)_{1\leq i,j\leq r}
\]
where $\calG_m^{[p,q]}=\calG_m^{[p,q]}(x|b), m\in \ZZ, p,q \in \ZZ_{\geq 0},$ are polynomials in $x$ and $b$ defined by the generating function
\[
\sum_{m\in \ZZ} \calG_m^{[p,q]} u^m = \frac{1}{1+\beta u^{-1}}\prod_{1\leq i\leq p}\frac{1+\beta x_i}{1-x_iu} \prod_{1\leq i\leq q} (1+ (u+\beta) b_i).
\]

\vspace{2mm}

Our proof is a generalization of the one in \cite{MatsumuraFlagged} to factorial (double) case. We show that both the tableau and determinant expressions satisfy the same compatibility with divided difference operators, which allows us to identify two formulas by induction. In fact, our proof shows that the original argument by Wachs \cite{Wachs} for flagged Schur polynomials works in the factorial case. A similar argument was employed by the first author in order to show a tableau formula of double Grothendieck polynomials associated to $321$-avoiding permutations \cite{Matsumura321}. If we specialize the above determinant formula by setting $\beta=0$, we recover the result by Chen--Li--Louck \cite{ChenLiLouck} for flagged double Schur functions. Our proof is different from theirs which is based on the lattice-path interpretation of the tableau expression.

It is also worth stressing that our proof is completely self-contained. In particular, as a consequence our main theorem, we obtain a purely algebraic and combinatoric proof of the formula
\[
\calG_m^{[p, p+m-1]}(x|b) = \sum_{T\in \FSVT((m), (p))} [x|b]^T.
\]
To the best of the authors' knowledge, the proof of this formula available in the literature uses a geometric argument established in \cite{HIMN}.

As an application, we obtain an alternative proof of the tableau and determinant formulas of vexillary double Grothendieck polynomials obtained by Knutson--Miller--Yong \cite{KnutsonMillerYong} and Hudson--Matsumura \cite{HudsonMatsumura2} respectively. We also generalize it to arbitrary factorial flagged Grothendieck polynomials. Namely, we show that they can be obtained from a product of linear polynomials. The analogous results for (non-factorial) flagged Schur/Grothendieck polynomials were obtained in \cite{Wachs} and \cite{MatsumuraFlagged}.

\section{Flagged Grothendieck polynomials}
\subsection{Flagged Partitions and Their Tableaux}
A {\it partition} $\lambda$ of length $r$ is a weakly decreasing sequence of positive integers $(\lambda_1,\dots,\lambda_r)$. We identify $\lambda$ with its {\it Young diagram (Ferrers diagram} $\{(i,j) \ |\ 1\leq i\leq r, 1\leq j\leq r\}$, depicted as a left-aligned array of boxes such that in the $i$th row from the top there are exactly $\lambda_i$ boxes. Let $|\lambda|$ be the total number of boxes in the Young diagram of $\lambda$, {\it i.e.} $|\lambda|=\lambda_1+\cdots+\lambda_r$. A {\it flagging} $f$ of a partition of length $r$ is a weakly increasing sequence of positive integers $(f_1,\dots, f_r)$. We call the pair $(\lambda,f)$ a {\it flagged partition}. 

A {\it set-valued tableau} $T$ of shape $\lambda$ is an assignment of a finite subset of positive integers to each box of the Young diagram of $\lambda$,  satisfying
\begin{itemize}
\item weakly increasing in each row: $\max A \leq \min B$ if $A$ fills  the box $(i,j)$ and $B$ fills the box $(i,j+1)$ for $1\leq i\leq r$ and $1\leq j\leq \lambda_i-1$.
\item strictly increasing in each column: $\max A < \min B$ if $A$ fills  the box $(i,j)$ and $B$ fills the box $(i+1,j)$ for $1\leq i\leq r-1$ and $1\leq j\leq \lambda_{i+1}$.
\end{itemize}
An element $e$ of a subset assigned to a box of $T$ is called an {\it entry} of $T$ and denoted by $e\in T$. The total number of entries in $T$ is denoted by $|T|$. For each $e\in T$, let $\val(e)$ be its numeric value, $r(e)$ its row index, and $c(e)$ its column index.  A {\it flagged set-valued tableau} of a flagged partition $(\lambda,f)$ is a set-value tableau of shape $\lambda$ additionally satisfying that the sets assigned to the boxes of the $i$-th row  are subsets of $\{1,\dots, f_i\}$. Let $\FSVT(\lambda,f)$ be the set of all flagged set-valued tableaux of $(\lambda,f)$.

Let $x=(x_i)_{\ZZ_{>0}}$ and $b=(b_i)_{\ZZ_{>0}}$ be sets of infinitely many variables. Let $\ZZ[\beta]$ be the polynomial ring of a formal variable $\beta$ where we set $\deg \beta =-1$. Let $\ZZ[\beta][x,b]$ and $\ZZ[\beta][[x,b]]$ be the rings of polynomials and of formal power series in $x$ and $b$ respectively. For each tableau $T \in \FSVT(\lambda,f)$, we define
\[
[x|b]^T := \prod_{e\in T}(x_{val(e)}\oplus b_{val(e)-r(e)+c(e)})
\]
where we set $u\oplus v :=u+v+\beta uv$. Following Knutson--Miller--Yong (\cite{KnutsonMillerYong}, \cite{KnutsonMillerYong2}), we define the {\it factorial flagged Grothendieck polynomial} $G_{\lambda,f}(x|b)$ as follows.
\begin{defn}
For a flagged partition $(\lambda,f)$, we define 
\[
G_{\lambda,f}=G_{\lambda,f}(x|b)=\sum_{T\in FSVT(\lambda,f)}\beta^{|T|-|\lambda|}[x|b]^T.
\]
If $\lambda$ is an empty partition, we set $G_{\lambda,f}=1$, and if $f_1=0$, we set $G_{\lambda,f}=0$.
\end{defn}
\begin{exm}\label{exm2-1}
Let $\lambda=(3,1)$ and $f=(2,4)$. Then $\FSVT(\lambda,f)$ contains tableaux such as

\vspace{2mm}

\begin{center}
\setlength{\unitlength}{0.5mm}
\begin{picture}(38,20)
\linethickness{0.7pt}
\put(0,20){\line(1,0){30}}\put(0,10){\line(1,0){30}}\put(0,0){\line(1,0){10}}\put(0,20){\line(0,-1){20}}\put(10,20){\line(0,-1){20}}\put(20,20){\line(0,-1){10}}\put(30,20){\line(0,-1){10}}
\put(3,13){$1$}\put(13,13){$1$}\put(22,13){$12$}
\put(1,3){$23$}
\end{picture}
\begin{picture}(38,20)
\linethickness{0.7pt}
\put(0,20){\line(1,0){30}}\put(0,10){\line(1,0){30}}\put(0,0){\line(1,0){10}}\put(0,20){\line(0,-1){20}}\put(10,20){\line(0,-1){20}}\put(20,20){\line(0,-1){10}}\put(30,20){\line(0,-1){10}}
\put(1,13){$12$}\put(13,13){$2$}\put(23,13){$2$}
\put(1,3){$34$}
\end{picture}
\begin{picture}(38,20)
\linethickness{0.7pt}
\put(0,20){\line(1,0){30}}\put(0,10){\line(1,0){30}}\put(0,0){\line(1,0){10}}\put(0,20){\line(0,-1){20}}\put(10,20){\line(0,-1){20}}\put(20,20){\line(0,-1){10}}\put(30,20){\line(0,-1){10}}
\put(3,13){$1$}\put(11,13){$12$}\put(23,13){$2$}
\put(1,3){$23$}
\end{picture}
\begin{picture}(38,20)
\linethickness{0.7pt}
\put(0,20){\line(1,0){30}}\put(0,10){\line(1,0){30}}\put(0,0){\line(1,0){10}}\put(0,20){\line(0,-1){20}}\put(10,20){\line(0,-1){20}}\put(20,20){\line(0,-1){10}}\put(30,20){\line(0,-1){10}}
\put(3,13){$2$}\put(13,13){$2$}\put(23,13){$2$}
\put(3,3){$4$}
\end{picture}.
\end{center}
If we change $f$ to $f'=(2,3)$, then $\FSVT(\lambda,f')$ doesn't contain the second and forth tableaux.

If $T$ is the first tableau in the above list, $|T|=6$ and we have
\[
[x|b]^T = (x_1\oplus b_1)(x_1\oplus b_2)(x_1\oplus b_3)(x_2\oplus b_4)(x_2\oplus b_1)(x_3\oplus b_2).
\]
\end{exm}
\subsection{Divided difference operators and $\calG_m^{[p,q]}$}
\begin{defn}
For each positive integer $i$, define the {\it $K$-theoretic divided difference operator} $\pi_i$ by
\[
\pi_i(f)=\frac{(1+\beta x_{i+1})f-(1+\beta x_{i})s_i(f)}{x_i-x_{i+1}}
\]
for each $f\in \ZZ[\beta][[x,b]]$, where $s_i$ permutes $x_i$ and $x_{i+1}$.
\end{defn}
The following properties of $\pi_i$ are easy to check (cf. \cite[\S2.1]{MatsumuraFlagged}). 
\begin{lem}\label{lem1}
For each positive integer $i$ and $f,g\in \ZZ[\beta][[x,b]]$, we have
\begin{enumerate}[$(1)$]
\item $\pi_i(fg)=\pi_i(f)g+s_i(f)\pi_i(g)+\beta s_i(f)g$.
\item If $f$ is symmetric in $x_i$ and $x_{i+1}$,  then $\pi_i(f)=-\beta f$ and $\pi_t(fg)=f\pi_t(g)$.
\item $\pi_i(f)=-\beta f$, then $f$ is symmetric in $x_i$ and $x_{i+1}$.
\end{enumerate}
\end{lem}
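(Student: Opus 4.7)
The plan is to unwind the definition of $\pi_i$ in each case and reduce to elementary algebra in the ring $\ZZ[\beta][[x,b]]$.

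For (1), I would expand $\pi_i(f)g + s_i(f)\pi_i(g)$ using the definition, placing both terms over the common denominator $x_i - x_{i+1}$. The numerator produces four terms, and the two middle terms combine as $s_i(f)g \cdot \bigl[(1+\beta x_{i+1}) - (1+\beta x_i)\bigr] = -\beta(x_i - x_{i+1})\, s_i(f)g$. Cancelling $x_i - x_{i+1}$ against the denominator accounts for a correction $-\beta s_i(f)g$, while the remaining two terms assemble into the numerator of $\pi_i(fg)$. Rearranging yields the asserted Leibniz-type identity.

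For (2), the assumption $s_i(f) = f$ reduces the defining expression to $\pi_i(f) = f \cdot \frac{(1+\beta x_{i+1}) - (1+\beta x_i)}{x_i - x_{i+1}} = -\beta f$. Substituting both $\pi_i(f) = -\beta f$ and $s_i(f) = f$ into the formula from (1) then gives $\pi_i(fg) = -\beta fg + f\pi_i(g) + \beta fg = f\pi_i(g)$, as claimed.

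For (3), I would rewrite $\pi_i(f) = -\beta f$ as $(1+\beta x_{i+1})f - (1+\beta x_i)s_i(f) = -\beta f (x_i - x_{i+1})$ and regroup the left-hand side using the identity $(1+\beta x_{i+1}) + \beta(x_i - x_{i+1}) = 1 + \beta x_i$, to obtain $(1+\beta x_i)\bigl(f - s_i(f)\bigr) = 0$. Since $\ZZ[\beta][[x,b]]$ is an integral domain and $1 + \beta x_i \neq 0$, this forces $f = s_i(f)$.

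No serious obstacle is expected; all three parts are routine symbolic manipulations. The only point requiring attention is in (3), where one must justify the cancellation of $1 + \beta x_i$; this is immediate from the fact that the ambient ring is a domain.
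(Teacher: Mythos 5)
Your proof is correct, and it is the standard direct verification; the paper itself does not reproduce a proof of this lemma (it defers to \cite{MatsumuraFlagged}), so this is exactly the kind of routine computation the authors had in mind. Parts (1) and (2) are handled cleanly. In part (3), the algebra leading to $(1+\beta x_i)\bigl(f - s_i(f)\bigr) = 0$ is right; for the final cancellation, a slightly more economical justification than invoking the domain property is to observe that $1+\beta x_i$ is a unit of $\ZZ[\beta][[x,b]]$, with inverse $\sum_{k\ge 0}(-\beta x_i)^k$, which makes cancellation immediate and avoids having to argue that a power series ring in infinitely many variables over $\ZZ[\beta]$ is a domain (true, but mildly more fuss to check than one wants here).
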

\begin{defn}
Define $\calG_m^{[p,q]}=\calG_m^{[p,q]}(x|b), m\in \ZZ, p,q \in \ZZ_{\geq 0}$, by the generating function
\[
\sum_{m\in \ZZ} \calG_m^{[p,q]}u^m = \frac{1}{1+\beta u^{-1}}\prod_{1\leq i\leq p}\frac{1+\beta x_i}{1-x_iu} \prod_{1\leq i\leq q} (1+ (u+\beta) b_i).
\]
If $q=0$, then we denote $\calG_m^{[p]}=\calG_m^{[p,q]}$.
\end{defn}
\begin{rem}\label{rem1}
We can easily check by direct computation (cf. \cite{MatsumuraFlagged}) that 
\[
\calG_m^{[p,q]}=(-\beta)^{-m} \ (m\leq 0), \ \ \ \  \calG_m^{[1]}=x_1^m\ (m\geq 0), \ \ \ \mbox{and} \ \ \calG_m^{[0,q]} = 0 \  (m> q).
\]
Moreover, we have $ \calG_{q+r}^{[1,q]}=x_1^r \calG_{q}^{[1,q]}$ for any integer $r\geq 0$. Indeed, if we let
\[
\prod_{i=1}^{q}\left(1+(u+\beta) b_i\right)=\sum_{i=0}^qE_i u^i,
\]
then, 
\[
\calG_{q+r}^{[1,q]}=\sum_{i=0}^{d} \calG_{i+r}^{[1]}E_{q-i}=x_1^r \sum_{i=0}^{q}\calG_i^{[1]}E_{q-i}=x_1^r \calG_q^{[1,q]}.
\]
\end{rem}
The following basic lemmas will be used in the next section.
\begin{lem}\label{lem2}
For each $m\in \ZZ, p,q \in \ZZ_{\geq 0}$, we have
$\pi_i({\calG}_m^{[p,q]})=
\begin{cases}
\calG_{m-1}^{[p+1,q]}&(i=p),\\
-\beta \calG_m^{[p,q]}&(i\neq p).
\end{cases}
$
\end{lem}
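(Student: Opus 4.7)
The plan is to work directly with the generating function
\[
F^{[p,q]}(u) := \sum_{m\in \ZZ} \calG_m^{[p,q]}u^m = \frac{1}{1+\beta u^{-1}}\prod_{1\leq j\leq p}\frac{1+\beta x_j}{1-x_ju} \prod_{1\leq j\leq q} (1+ (u+\beta) b_j),
\]
apply $\pi_i$ to the $u^m$-coefficients, and then identify the result by comparing to another generating function. Since $\pi_i$ acts on the coefficient ring $\ZZ[\beta][[x,b]]$, this is legitimate once we check that $F^{[p,q]}(u)$ is a Laurent-type series in $u$ with coefficients in that ring.

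For the case $i \neq p$, I would observe that $F^{[p,q]}(u)$ is symmetric in $x_i$ and $x_{i+1}$: if $i > p$ neither variable appears, and if $i < p$ both appear symmetrically in the product $\prod_{1\leq j\leq p}\frac{1+\beta x_j}{1-x_ju}$. Hence every coefficient $\calG_m^{[p,q]}$ is symmetric in these two variables, and Lemma \ref{lem1}(2) (or equivalently a direct substitution into the definition of $\pi_i$) yields $\pi_i(\calG_m^{[p,q]}) = -\beta\, \calG_m^{[p,q]}$.

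For $i=p$, the only factor of $F^{[p,q]}(u)$ involving $x_p$ or $x_{p+1}$ is $\frac{1+\beta x_p}{1-x_pu}$; every other factor is trivially symmetric in $x_p, x_{p+1}$. By Lemma \ref{lem1}(2) it therefore suffices to compute $\pi_p$ on this single factor. A direct calculation placing the two fractions over a common denominator gives
\[
\pi_p\!\left(\frac{1+\beta x_p}{1-x_pu}\right) = u\cdot \frac{(1+\beta x_p)(1+\beta x_{p+1})}{(1-x_pu)(1-x_{p+1}u)}.
\]
Reintroducing the symmetric factors then identifies
\[
\pi_p\bigl(F^{[p,q]}(u)\bigr) \;=\; u\cdot F^{[p+1,q]}(u),
\]
and extracting the coefficient of $u^m$ delivers $\pi_p(\calG_m^{[p,q]}) = \calG_{m-1}^{[p+1,q]}$.

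The only step that is not pure bookkeeping is the explicit computation in the $i=p$ case, which reduces to the elementary identity $\frac{1}{1-x_pu}-\frac{1}{1-x_{p+1}u} = \frac{(x_p-x_{p+1})u}{(1-x_pu)(1-x_{p+1}u)}$ combined with the prefactor $(1+\beta x_p)(1+\beta x_{p+1})/(x_p-x_{p+1})$ coming from the definition of $\pi_p$. There is no real obstacle; the argument is essentially a one-line generating function manipulation once the symmetric factors are separated.
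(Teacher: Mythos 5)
Your proof is correct and follows the same route the paper indicates: the paper's proof simply cites ``applying divided difference operators to the generating function'' (referring to \cite{MatsumuraFlagged}), and you have carried out exactly that computation, separating out the unique factor $\frac{1+\beta x_p}{1-x_pu}$, using the Leibniz-type property from Lemma~\ref{lem1}(2), and matching $u^m$-coefficients after the identity $\pi_p\bigl(F^{[p,q]}(u)\bigr)=u\,F^{[p+1,q]}(u)$.
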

\begin{proof}
The claim follows from applying divided difference operators to the generating function of $\calG_m^{[p,q]}$ (cf. \cite[Lemma 1]{MatsumuraFlagged}).
\end{proof}
\begin{lem}\label{lem3}
Let $t$ and $t_i \ (i=1,\dots, n)$ be arbitrary positive integers and $t':=t+1$. We have
\[
\pi_t\left(\prod_{i=1}^{n}(x_t\oplus b_{t_i})\right)
=\sum_{v=0}^{n-1}\left(\prod_{i=1}^v(x_t\oplus b_{t_i})\prod_{i=v+2}^{n}(x_{t'}\oplus b_{t_i})\right) +\beta\sum_{v=1}^{n-1}\left(\prod_{i=1}^v(x_t\oplus b_{t_i})\prod_{i=v+1}^n(x_{t'}\oplus b_{t_i})\right).
\]
In particular, $\pi_t(x_t\oplus b_{t_1})=1$.
\end{lem}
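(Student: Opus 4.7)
The plan is to compute $\pi_t(P_n)$ directly from the definition of $\pi_t$, where $P_n := \prod_{i=1}^{n}(x_t \oplus b_{t_i})$, and extract the right-hand side via a single telescoping identity. Writing $a_i := x_t \oplus b_{t_i}$ and $a'_i := s_t(a_i) = x_{t'} \oplus b_{t_i}$, so that $s_t(P_n) = \prod_i a'_i$, the definition of $\pi_t$ rearranges as
\[
\pi_t(P_n) \;=\; (1+\beta x_{t'})\cdot \frac{P_n - s_t(P_n)}{x_t - x_{t'}} \;-\; \beta\, s_t(P_n).
\]

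The computation then rests on two elementary identities obtained by unpacking $\oplus$. The first, $a_v - a'_v = (x_t - x_{t'})(1 + \beta b_{t_v})$, combines with the standard telescoping $P_n - s_t(P_n) = \sum_{v=1}^{n}\prod_{i<v}a_i \cdot (a_v - a'_v) \cdot \prod_{i>v}a'_i$ to cancel the denominator $x_t - x_{t'}$ cleanly. The second, $(1+\beta x_{t'})(1+\beta b_{t_v}) = 1 + \beta a'_v$, lets the prefactor $(1+\beta x_{t'})$ be absorbed into each summand. After applying both I would arrive at
\[
\pi_t(P_n) \;=\; \sum_{v=1}^{n}\prod_{i<v}a_i\prod_{i>v}a'_i \;+\; \beta \sum_{v=1}^{n}\prod_{i<v}a_i\prod_{i\geq v}a'_i \;-\; \beta\, s_t(P_n).
\]
The $v=1$ term of the middle sum is precisely $s_t(P_n)$ and cancels the last term, so the middle sum collapses to range $v = 2,\dots,n$. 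A shift of summation index $v \mapsto v+1$ in each of the two remaining sums then recovers exactly the claimed formula, with the first sum running over $v = 0,\dots,n-1$ and the second over $v = 1,\dots,n-1$. The special case $\pi_t(x_t \oplus b_{t_1}) = 1$ is the $n=1$ instance, and can also be verified directly from the stronger observation $(1+\beta x_{t'})a_v - (1+\beta x_t) a'_v = x_t - x_{t'}$.

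The main obstacle is purely bookkeeping: tracking range adjustments and index shifts so that the sums after cancellation align with the stated right-hand side. No deep technical difficulty is hidden; the entire algebraic content sits in the two identities above, each of which follows immediately from $u \oplus v = u + v + \beta u v$. An alternative route via induction on $n$ using the Leibniz-type rule of Lemma \ref{lem1}(1), with base case $\pi_t(a_1) = 1$, would also work, but the direct approach is preferable since every summand on the right-hand side acquires a transparent origin from a single telescoping step.
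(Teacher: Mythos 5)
Your proof is correct, and it takes a genuinely different route from the paper. The paper proves Lemma~\ref{lem3} by induction on $n$: the base case is $\pi_t(x_t\oplus b_{t_1})=1$, and the inductive step peels off the last factor $x_t\oplus b_{t_n}$ using the Leibniz-type rule $\pi_t(fg)=\pi_t(f)g+s_t(f)\pi_t(g)+\beta s_t(f)g$ from Lemma~\ref{lem1}(1), then reassembles the right-hand side from the inductive hypothesis. You instead compute $\pi_t$ directly from its definition via the rearrangement $\pi_t(P_n)=(1+\beta x_{t'})\frac{P_n-s_t(P_n)}{x_t-x_{t'}}-\beta\, s_t(P_n)$, cancel the denominator through the standard telescoping decomposition of $P_n-s_t(P_n)$ together with the identity $a_v-a'_v=(x_t-x_{t'})(1+\beta b_{t_v})$, absorb the prefactor via $(1+\beta x_{t'})(1+\beta b_{t_v})=1+\beta a'_v$, and observe that the $v=1$ term kills the trailing $-\beta s_t(P_n)$. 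I have checked each of these identities and the final reindexing; everything lines up with the stated formula. The trade-off is clear: the paper's induction is shorter and leans on the already-established Leibniz rule, while your telescoping computation is self-contained (it does not invoke Lemma~\ref{lem1}(1) at all) and, as you note, makes the provenance of each summand on the right-hand side transparent in a single pass rather than distributing it across induction steps. Your closing remark that $(1+\beta x_{t'})a_v-(1+\beta x_t)a'_v=x_t-x_{t'}$ is also correct and gives a clean one-line verification of the $n=1$ case.
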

\begin{proof}
We prove the claim by induction on $n$. When $n=1$, we can show that $\pi_t(x_t\oplus b_s)=1$ by direct computation. For a general $n$, we apply Lemma \ref{lem1} (1) with $f=x_t\oplus b_{t_n}$ and $g$ the rest:
\begin{eqnarray*}
\pi_t\left(\prod_{i=1}^n(x_t\oplus b_{t_i})\right)
=\prod_{i=1}^{n-1}(x_t\oplus b_{t_i})
+(x_{t'}\oplus b_{t_n})\pi_t\left(\prod_{i=1}^{n-1}(x_t\oplus b_{t_i})\right)
+\beta (x_{t'}\oplus b_{t_n})\prod_{i=1}^{n-1}(x_t\oplus b_{t_i}).
\end{eqnarray*}
By the induction hypothesis, we have 
\begin{eqnarray*}
\pi_t\left(\prod_{i=1}^n(x_t\oplus b_{t_i})\right)\!\!\!&=&\!\!\!\prod_{i=1}^{n-1}(x_t\oplus b_{t_i})+(x_{t'}\oplus b_{t_n})\sum_{v=0}^{n-2}\left(\prod_{i=1}^v(x_t\oplus b_{t_i})\prod_{i=v+2}^{n-1}(x_{t'}\oplus b_{t_i})\right) \\
&&\!\!\!\!\!\!+\beta\left((x_{t'}\oplus b_{t_n})\sum_{v=1}^{n-2}\left(\prod_{i=1}^v(x_t\oplus b_{t_i})\prod_{i=v+1}^{n-1}(x_{t'}\oplus b_{t_i})\right)+(x_{t'}\oplus b_{t_n})\prod_{i=1}^{n-1}(x_t\oplus b_{t_i})\right)
\end{eqnarray*}
This is exactly the right hand side of the desired formula.
\end{proof}
\begin{rem}\label{rem2}
Since the left hand side of the formula in Lemma \ref{lem3} is symmetric in variables $b_{t_1},\dots, b_{t_n}$, we can conclude that the right hand side is symmetric in $x_t$ and $x_{t+1}$. 
\end{rem}
\begin{lem}[cf. Lemma 3 \cite{MatsumuraFlagged}]\label{lem4}
For any integers $m\in \ZZ$, $p\in \ZZ_{\geq 1}$ and $q \in \ZZ_{\geq 0}$, we have
\[
\frac{1}{1+x_1\beta}\left( \calG_{m}^{[p,q]}-x_1 \calG_{m-1}^{[p,q]}\right)=\left(\calG_{m}^{[p-1,q]}\right)^{\star}, 
\]
where $\star$ replaces each $x_i$ by $x_{i+1}$.
\end{lem}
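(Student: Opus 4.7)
The plan is simply to compute both sides via the defining generating function and match coefficients of $u^m$. Since the statement is an identity of formal power series in $u$ term by term, and the generating function for $\calG_m^{[p,q]}$ is given explicitly, everything reduces to a manipulation inside
\[
\sum_{m\in\ZZ}\calG_m^{[p,q]}u^m \;=\; \frac{1}{1+\beta u^{-1}}\prod_{i=1}^{p}\frac{1+\beta x_i}{1-x_iu}\prod_{i=1}^{q}(1+(u+\beta)b_i).
\]

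First I would multiply the generating function by the factor $(1-x_1 u)$. On the coefficient side this produces exactly $\sum_m\bigl(\calG_m^{[p,q]}-x_1\calG_{m-1}^{[p,q]}\bigr)u^m$, and on the product side it cancels the $i=1$ denominator factor $1-x_1u$, leaving
\[
(1+\beta x_1)\cdot\frac{1}{1+\beta u^{-1}}\prod_{i=2}^{p}\frac{1+\beta x_i}{1-x_iu}\prod_{i=1}^{q}(1+(u+\beta)b_i).
\]
Next I would divide through by $1+\beta x_1$, which removes the remaining numerator contribution from index $i=1$. The resulting generating function is
\[
\frac{1}{1+\beta u^{-1}}\prod_{i=2}^{p}\frac{1+\beta x_i}{1-x_iu}\prod_{i=1}^{q}(1+(u+\beta)b_i),
\]
and reindexing $i\mapsto i+1$ in the first product shows this is precisely the image under the shift $\star\colon x_i\mapsto x_{i+1}$ of
\[
\frac{1}{1+\beta u^{-1}}\prod_{i=1}^{p-1}\frac{1+\beta x_i}{1-x_iu}\prod_{i=1}^{q}(1+(u+\beta)b_i)\;=\;\sum_{m\in\ZZ}\calG_m^{[p-1,q]}u^m.
\]
Matching coefficients of $u^m$ gives the stated identity.

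There is essentially no obstacle here: the only thing to watch is that the $b$-factors depend only on $u$ and $\beta$ and not on any $x_i$, so they are untouched by the shift $\star$ and by the manipulation on the $x_1$ variable, which is what makes the identity work uniformly in $q$. The argument is the obvious factorial (double) analogue of Lemma~3 in \cite{MatsumuraFlagged}, with the extra $\prod_{i=1}^q(1+(u+\beta)b_i)$ merely coming along for the ride.
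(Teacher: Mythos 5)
Your argument is correct and is exactly the approach the paper takes: the paper's proof states the same generating-function identity and says it follows by a direct computation, which is precisely what you have spelled out (multiply by $1-x_1u$, divide by $1+\beta x_1$, then observe the result is the $\star$-shift of the $[p-1,q]$ series, the $b$-factors being unaffected).
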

\begin{proof}
It follows from the identity
\[
 \sum_{m\in\ZZ}\frac{1}{1+x_1\beta}\left( \calG_{m}^{[p,q]}-x_1 \calG_{m-1}^{[p,q]}\right)u^m
 =\left(\sum_{m\in\ZZ}\calG_{m}^{[p-1,q]}u^{m}\right)^{\star},
\]
which can be check by a direct computation.
\end{proof}
\section{Propositions and the Main theorem}
In this section, we prove the main theorem. First we show three propositions that will be used in the proof of the main theorem given at the end of this section. Through the section, we let $(\lambda,f)$ to be a flagged partition of length $r$.
\begin{defn}
We denote the following determinantal expression by $\tG_{\lambda,f}=\tG_{\lambda,f}(x|b)$:
\[
\tG_{\lambda,f}(x|b)=\det\left(\sum_{s=0}^{\infty}\binom{i-j}{s}\beta^s \calG_{\lambda_i-i+j+s}^{[f_i,f_i+\lambda_i-i]}\right)_{(1\leq i,j\leq r)}.
\]
If $\lambda$ is empty, we set $\tG_{\lambda,f}=1$. If $f_1=0$, the right hand side is identically zero by Remark \ref{rem1} so that we set $\tG_{\lambda,f}=0$.
\end{defn}
\begin{prop}\label{prop1}
For any integer $q\geq 0$, we have $\calG_{q}^{[1,q]}=\displaystyle\prod_{i=1}^q(x_1\oplus b_i)$. 
\end{prop}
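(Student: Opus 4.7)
The plan is to proceed by induction on $q$. The base case $q=0$ is immediate: the right-hand side is the empty product equal to $1$, while the left-hand side is $\calG_0^{[1,0]}=\calG_0^{[1]}=x_1^0=1$ by Remark \ref{rem1}.

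For the inductive step, I would first extract a one-step recursion from the defining generating function. Since the last factor $1+(u+\beta)b_q$ splits off cleanly,
\[
\sum_{m\in\ZZ}\calG_m^{[1,q]}u^m=\left(\sum_{m\in\ZZ}\calG_m^{[1,q-1]}u^m\right)\bigl(1+(u+\beta)b_q\bigr),
\]
so reading off the coefficient of $u^q$ yields
\[
\calG_q^{[1,q]}=(1+\beta b_q)\,\calG_q^{[1,q-1]}+b_q\,\calG_{q-1}^{[1,q-1]}.
\]
The key move is then to invoke the stabilization identity $\calG_{q+r}^{[1,q]}=x_1^r\calG_q^{[1,q]}$ from Remark \ref{rem1}, applied with parameters $(q-1,1)$ in place of $(q,r)$, to substitute $\calG_q^{[1,q-1]}=x_1\calG_{q-1}^{[1,q-1]}$. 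The recursion collapses to
\[
\calG_q^{[1,q]}=(x_1+b_q+\beta x_1 b_q)\,\calG_{q-1}^{[1,q-1]}=(x_1\oplus b_q)\,\calG_{q-1}^{[1,q-1]},
\]
at which point the inductive hypothesis finishes the argument.

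There is no real obstacle here; the only non-trivial ingredient beyond elementary power series manipulation is the stabilization $\calG_q^{[1,q-1]}=x_1\calG_{q-1}^{[1,q-1]}$ recorded in Remark \ref{rem1}, which encodes the fact that the single-variable series $\sum_m\calG_m^{[1]}u^m$ contributes only nonnegative powers $x_1^m u^m$ once $m\geq 0$. An alternative route would be to extract the coefficient of $u^q$ directly from the full generating function, expand $\prod_i(1+(u+\beta)b_i)=\sum_j e_j(b)(u+\beta)^j$ in elementary symmetric polynomials, and match term by term against the expansion $\prod_i(x_1+b_i(1+\beta x_1))=\sum_j x_1^{q-j}(1+\beta x_1)^j e_j(b)$; this works but is less economical than the one-line induction above.
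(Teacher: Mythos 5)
Your argument is correct and is essentially identical to the paper's: induct on $q$, split off the factor $1+(u+\beta)b_q$ from the generating function to get $\calG_q^{[1,q]}=(1+\beta b_q)\calG_q^{[1,q-1]}+b_q\calG_{q-1}^{[1,q-1]}$, and then use the stabilization $\calG_q^{[1,q-1]}=x_1\calG_{q-1}^{[1,q-1]}$ from Remark~\ref{rem1} to collapse this to $(x_1\oplus b_q)\calG_{q-1}^{[1,q-1]}$. The alternative direct coefficient-extraction you sketch at the end is a fine sanity check but, as you note, the inductive route is what the paper uses.
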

\begin{proof}
We prove the claim by induction on $q$. When $q=0$, it is trivial. Suppose that $q>0$. By definition of $\calG_m^{[p,q]}$, we have
\[
\sum_{m\in\ZZ} \calG_m^{[1,q]}u^m=\left(\sum_{m\in\ZZ} \calG_m^{[1,q-1]}u^m\right)(1+\beta b_q+b_qu),
\]
so that 
\[
\calG_q^{[1,q]} = \calG_{q}^{[1,q-1]}(1+\beta b_q)+ \calG_{q-1}^{[1,q-1]}b_q.
\]
Since $\calG_{q}^{[1,q-1]}=x_1\calG_{q-1}^{[1,q-1]}$ by Remark \ref{rem1}, we obtain
\[
\calG_q^{[1,q]}=\calG_{q-1}^{[1,q-1]}x_1(1+\beta b_q)+ \calG_{q-1}^{[1,q-1]}b_q = \calG_{q-1}^{[1,q-1]} (x_1\oplus b_q).
\]
The induction hypothesis implies the desired formula. 
\end{proof}
\begin{prop}\label{prop2}
Suppose that $f_1=1$ and let 
\[
\lambda'=(\lambda_2,\lambda_3\cdots,\lambda_r)\ \ \  \mbox{and} \ \ \ f'=(f_2-1,f_3-1,\cdots,f_r-1).
\]
Then we have
\begin{itemize}
\item[(1)]$\tG_{\lambda,f}(x|b)= \calG_{\lambda_1}^{[1,\lambda_1]}\cdot \tG_{\lambda',f'}(x|b)^{\star}$,
\item[(2)]$G_{\lambda,f}(x|b)=\calG_{\lambda_1}^{[1,\lambda_1]}\cdot G_{\lambda',f'}(x|b)^{\star}$,
\end{itemize}
where $\star$ replaces each $x_i$ by $x_{i+1}$. 
\end{prop}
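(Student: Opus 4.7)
The plan is to handle the two parts separately, with the combinatorial identity (2) via a direct bijection on tableaux and the determinantal identity (1) via column operations and cofactor expansion.

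For (2), the hypothesis $f_1=1$ forces every $T\in\FSVT(\lambda,f)$ to have each of the $\lambda_1$ boxes of the top row containing the singleton $\{1\}$. The row-$1$ contribution to $[x|b]^T$ therefore factors out as $\prod_{c=1}^{\lambda_1}(x_1\oplus b_c)$, which equals $\calG_{\lambda_1}^{[1,\lambda_1]}$ by Proposition~\ref{prop1}. Deleting the top row and relabeling each remaining value $v\mapsto v-1$ gives a bijection $\FSVT(\lambda,f)\to\FSVT(\lambda',f')$: both $\val(e)$ and $r(e)$ drop by one, so the content $\val(e)-r(e)+c(e)$ is preserved, while the shift $\val(e)\mapsto\val(e)-1$ precisely accounts for the $\star$-substitution $x_i\mapsto x_{i+1}$. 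Since $|T|-|\lambda|=|T'|-|\lambda'|$, summing over tableaux yields (2).

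For (1), I would first simplify the top row of the defining matrix. Using $f_1=1$ together with the identity $\calG_{\lambda_1+r}^{[1,\lambda_1]}=x_1^r\calG_{\lambda_1}^{[1,\lambda_1]}$ of Remark~\ref{rem1}, the $(1,j)$-entry collapses to $\calG_{\lambda_1}^{[1,\lambda_1]}\cdot x_1^{j-1}(1+\beta x_1)^{1-j}$. Setting $\alpha:=x_1/(1+\beta x_1)$, the top row becomes $\calG_{\lambda_1}^{[1,\lambda_1]}(1,\alpha,\alpha^2,\ldots,\alpha^{r-1})$. Performing the column operations $C_j\mapsto C_j-\alpha C_{j-1}$ for $j=r,r-1,\ldots,2$ preserves the determinant and kills every entry of the top row except the $(1,1)$-entry $\calG_{\lambda_1}^{[1,\lambda_1]}$. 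For $i\geq 2$ and $j\geq 2$, Pascal's rule $\binom{i-j+1}{s}=\binom{i-j}{s}+\binom{i-j}{s-1}$ allows one to combine $(1+\beta x_1)M_{ij}-x_1 M_{i,j-1}$ into a single sum of terms of the form $\calG_m^{[f_i,f_i+\lambda_i-i]}-x_1\calG_{m-1}^{[f_i,f_i+\lambda_i-i]}$. Applying Lemma~\ref{lem4} termwise and dividing by $1+\beta x_1$ yields
\[
M_{ij}-\alpha M_{i,j-1}=\sum_{s=0}^{\infty}\binom{i-j}{s}\beta^s\left(\calG_{\lambda_i-i+j+s}^{[f_i-1,f_i+\lambda_i-i]}\right)^{\star}.
\]
A direct check of index data ($\lambda'_{i-1}=\lambda_i$, $f'_{i-1}=f_i-1$, hence $\lambda'_{i-1}-(i-1)+(j-1)+s=\lambda_i-i+j+s$ and $f'_{i-1}+\lambda'_{i-1}-(i-1)=f_i+\lambda_i-i$) shows this is exactly the $(i-1,j-1)$-entry of the matrix defining $\tG_{\lambda',f'}^{\star}$. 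Cofactor expansion along the first row then gives (1).

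The main obstacle is the bookkeeping in the column-operation identity for $i\geq 2$: one must carry out the Pascal's-rule reindexing so that the cross term cancels the $\beta x_1 M_{ij}$ contribution and leaves precisely the combination $\calG_m-x_1\calG_{m-1}$ on which Lemma~\ref{lem4} acts, and then track the flagging and degree parameters carefully enough to recognize the resulting entries as those of $\tG_{\lambda',f'}^{\star}$. Once this matching is in place, the remainder of the argument, including the combinatorial half, is essentially formal.
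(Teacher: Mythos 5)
Your argument is correct and follows essentially the same approach as the paper: part (2) by the bijection that strips the forced top row of singletons $\{1\}$ and shifts the remaining values down by one, and part (1) by the column operations $C_j\mapsto C_j-\frac{x_1}{1+\beta x_1}C_{j-1}$, Pascal's rule, and Lemma~\ref{lem4}. Your explicit descending order $j=r,\dots,2$ for the column operations, and your careful tracking of the indices $\lambda'_{i-1},f'_{i-1}$, make the bookkeeping somewhat cleaner than the paper's write-up (which contains a few $\lambda_1$-vs-$\lambda_i$ typos), but the content is identical.
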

\begin{proof}
(1) We show that the left hand side coincides with the right hand side by the column operation of subtracting the $(j-1)$-st column multiplied with $x_1(1+x_1\beta)^{-1}$ from the $j$-th column for $j=2,\dots,r$. By Remark \ref{rem1}, the $(1,j)$ entry of $\tG_{\lambda,f}(x|b)$ is
\[
\sum_{s=0}^{\infty}\binom{1-j}{s}\beta^s \calG_{\lambda_1-1+j+s}^{[1,\lambda_1]}
=\sum_{s=0}^{\infty}\binom{1-j}{s}\beta^sx_1^{j-1+s}\calG_{\lambda_1}^{[1,\lambda_1]}
=(1+x_1\beta)^{1-j}x_1^{j-1}\calG_{\lambda_1}^{[1,\lambda_1]}.
\]
Thus after the above column operation, the first row of $\tG_{\lambda,f}(x|b)$ becomes $(\calG_{\lambda_1}^{[1,\lambda_1]},0,\dots, 0)$. We compute the $(i,j)$ entry of the resulting determinant for $i,j\geq 2$:
\begin{eqnarray*}
&&\sum_{s=0}^{\infty}
\binom{i-j}{s}
\beta^s \calG_{\lambda_1-i+j+s}^{[f_i,f_i+\lambda_i-i]}
-\frac{x_1}{1+x_1\beta}\sum_{s=0}^{\infty}\binom{i-j+1}{s}\beta^s \calG_{\lambda_i-i+j-1+s}^{[f_i,f_i+\lambda_i-i]}\\
&=&\quad\sum_{s=0}^{\infty}\binom{i-j}{s}\beta^s \calG_{\lambda_1-i+j+s}^{[f_i,f_i+\lambda_i-i]}
-\frac{x_1}{1+x_1\beta}\sum_{s=0}^{\infty}\binom{i-j}{s}
+\binom{i-j}{s-1}\beta^s \calG_{\lambda_i-i+j-1+s}^{[f_i,f_i+\lambda_i-i]}\\
&=&\sum_{s=0}^{\infty}\binom{i-j}{s}\beta^s\left( \calG_{\lambda_1-i+j+s}^{[f_i,f_i+\lambda_i-i]}
-\frac{x_1}{1+x_1\beta} \calG_{\lambda_1-i+j+s-1}^{[f_i,f_i+\lambda_i-i]}\right)
-\sum_{s'=0}^{\infty}\binom{i-j}{s'}\beta^{s'+1} \calG_{\lambda_i-i+j+s'+1}^{[f_i,f_i+\lambda_i-i]}\\
&=&\sum_{s=0}^{\infty}\binom{i-j}{s}\beta^s\frac{1}{1+x_1\beta}\left(\calG_{\lambda_i-i+j+s}^{[f_i,f_i+\lambda_i-i]}-x_1 \calG_{\lambda_i-i+j+s-1}^{[f_i,f_i+\lambda_i-i]}\right)\\
&=&\sum_{s=0}^{\infty}\binom{i-j}{s}\beta^s\left(\calG_{\lambda_i-1+j+s}^{[f_i-1,f_i+\lambda_i-i]}\right)^{\star}.
\end{eqnarray*}
Here we have used a well-known identity for binomial coefficients for the first equality, and Lemma \ref{lem4} for the last equality. Finally the desired formula follows from the cofactor expansion with respect to the first row for the resulting determinant after the column operation.

 (2) For any $T\in \FSVT(\lambda,f)$, the entries on the first row of $T$ are all $1$ and all other entries are greater than $1$. There is a bijection from $\FSVT(\lambda,f)$ to $\FSVT(\lambda',f')$ sending $T$ to $T'$ obtained from $T$ by deleting its first row and decreasing the numeric values of the rest of the entries by $1$. Under this bijection, we have
\[
[x|b]^T = \left([x|b]^{T'}\right)^{\star}\cdot \prod_{j=1}^{\lambda_1}(x_1\oplus b_j).
\] 
Now the claim follows from Proposition \ref{prop1}.
\end{proof}
\begin{prop}\label{prop3}
If $\lambda_1>\lambda_2$ and $f_1<f_2$, then
\begin{itemize}
\item[(1)] $\pi_{f_1}\tG_{\lambda,f}(x|b)=\tG_{\lambda',f'}(x|b)$,
\item[(2)] $\pi_{f_1}G_{\lambda,f}(x|b)=G_{\lambda',f'}(x|b)$,
\end{itemize}
where $\lambda'=(\lambda_1-1,\lambda_2,\cdots,\lambda_r)$ and $f'=(f_1+1,f_2,\cdots,f_r)$.
\end{prop}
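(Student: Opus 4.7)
The plan is to prove (1) and (2) separately, each by combining the explicit effect of $\pi_{f_1}$ on the appropriate building blocks.

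For (1), I would expand $\tG_{\lambda,f}$ along its first row. The crucial observation is that for every $i\geq 2$ the entries in row $i$ of the defining matrix are $\ZZ[\beta]$-linear combinations of $\calG_m^{[f_i,f_i+\lambda_i-i]}$ with $f_i\geq f_2>f_1$, so by Lemma \ref{lem2} each such entry $E$ satisfies $\pi_{f_1}(E)=-\beta E$, and by Lemma \ref{lem1}(3) it is therefore symmetric in $x_{f_1}$ and $x_{f_1+1}$. Consequently every cofactor obtained by deleting the first row is symmetric in $x_{f_1},x_{f_1+1}$, and by Lemma \ref{lem1}(2) the operator $\pi_{f_1}$ passes through the cofactor expansion, acting only on the first-row entries. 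By Lemma \ref{lem2} again,
\[
\pi_{f_1}\!\left(\calG_{\lambda_1-1+j+s}^{[f_1,f_1+\lambda_1-1]}\right)=\calG_{\lambda_1-2+j+s}^{[f_1+1,f_1+\lambda_1-1]},
\]
and since $\lambda_1'-1+j=\lambda_1-2+j$ and $f_1'+\lambda_1'-1=f_1+\lambda_1-1$, these are precisely the $(1,j)$-entries of $\tG_{\lambda',f'}$, while rows $i\geq 2$ are identical for $(\lambda,f)$ and $(\lambda',f')$. Reassembling the cofactor expansion yields (1).

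For (2), I would argue combinatorially. For each $T\in\FSVT(\lambda,f)$ factor
\[
[x|b]^T=R_T\cdot P_T\cdot Q_T,
\]
where $P_T$ is the product of the factors $(x_{f_1}\oplus b_\ast)$ coming from entries of value $f_1$, $Q_T$ the product of $(x_{f_1+1}\oplus b_\ast)$ from entries of value $f_1+1$ (which can only lie in rows $i\geq 2$ since $\val\leq f_1$ in row $1$), and $R_T$ the remaining factors. Since $R_T$ involves neither $x_{f_1}$ nor $x_{f_1+1}$, Lemma \ref{lem1}(2) gives $\pi_{f_1}([x|b]^T)=R_T\cdot\pi_{f_1}(P_TQ_T)$. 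Using the Leibniz rule (Lemma \ref{lem1}(1)) I would split $\pi_{f_1}(P_TQ_T)=\pi_{f_1}(P_T)Q_T+s_{f_1}(P_T)\pi_{f_1}(Q_T)+\beta s_{f_1}(P_T)Q_T$, compute $\pi_{f_1}(P_T)$ directly by Lemma \ref{lem3}, and evaluate $\pi_{f_1}(Q_T)$ from the identity $\pi_{f_1}(Q_T)+\pi_{f_1}(s_{f_1}Q_T)=-\beta(Q_T+s_{f_1}Q_T)$ (immediate from the definition) together with Lemma \ref{lem3} applied to the $x_{f_1}$-polynomial $s_{f_1}Q_T$.

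The result is an explicit sum of weighted monomials attached to each $T$. I would then construct a weight-preserving bijection between the resulting multiset of monomials, taken over all $T\in\FSVT(\lambda,f)$, and the weighted contributions $\beta^{|T'|-|\lambda'|}[x|b]^{T'}$ for $T'\in\FSVT(\lambda',f')$. Combinatorially the bijection performs a Bender--Knuth-style rearrangement on the strip of values $\{f_1,f_1+1\}$ in $T$, combined with deletion of the rightmost box of row $1$ (valid because $\lambda_1>\lambda_2$, so that box has no column-descendant), producing a $T'$ whose row $1$ may now contain value $f_1+1$ (valid because $f_1'=f_1+1$); the extra $\beta$ terms coming from Lemma \ref{lem3} and Leibniz account for the set-valued cells in $T'$. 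I expect the main obstacle to be the bookkeeping of this bijection: every monomial produced by the expansion must correspond to a unique $T'\in\FSVT(\lambda',f')$ with the correct power of $\beta$, and any term that would correspond to a ``forbidden'' $T'$ (for example placing $f_1+1$ in a row-$1$ column $j\leq\lambda_2$ when $f_2=f_1+1$, which would violate column-strictness with row $2$) must be seen to cancel against contributions of $\pi_{f_1}(Q_T)$. The hypothesis $f_1<f_2$ is used precisely to guarantee this compatibility.
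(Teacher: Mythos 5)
Your argument for part (1) is correct and is essentially the paper's proof: expand along the first row, observe via Lemma \ref{lem2} and Lemma \ref{lem1}(3) that all entries in rows $\geq 2$ are symmetric in $x_{f_1},x_{f_1+1}$ (using $f_1<f_2$), so by Lemma \ref{lem1}(2) the operator $\pi_{f_1}$ only hits the first-row entries, and Lemma \ref{lem2} converts them into the first-row entries of $\tG_{\lambda',f'}$.

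For part (2) your route is genuinely different from the paper's, and the difference matters. You propose to apply $\pi_{f_1}$ to each individual $[x|b]^T=R_TP_TQ_T$ via Leibniz on the factorization ``all $f_1$-factors'' $\times$ ``all $(f_1{+}1)$-factors'', computing $\pi_{f_1}(Q_T)$ from $\pi_{f_1}(Q_T)+\pi_{f_1}(s_{f_1}Q_T)=-\beta(Q_T+s_{f_1}Q_T)$, and then match the resulting monomials bijectively to $T'\in\FSVT(\lambda',f')$. The paper does something structurally cleaner: it first groups tableaux into equivalence classes $\scA$ determined by the ``ribbon'' of boxes containing $f_1$ or $f_1{+}1$, writes $\sum_{T\in\scA}M(T)=R(\scA)\prod_iR(A_i)\prod_jR(B_j)$, and observes (via Lemma \ref{lem3} and Remark \ref{rem2}) that every factor except $R(A_1)$ is already symmetric in $x_{f_1},x_{f_1+1}$. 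Thus $\pi_{f_1}$ acts only on $R(A_1)$, a pure product of $(x_{f_1}\oplus b_\ast)$ factors, where Lemma \ref{lem3} gives a sign-free expansion; the only cancellation needed is the single pairing $[\calF_1]\sqcup[\widetilde{\calF_2}]\mapsto 0$.

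The gap in your proposal is precisely the ``main obstacle'' you flag. Your $\pi_{f_1}(Q_T)$ inherently produces terms with negative signs (both from $-\beta(Q_T+s_{f_1}Q_T)$ and from $-\pi_{f_1}(s_{f_1}Q_T)$), so the Leibniz expansion of $\pi_{f_1}(P_TQ_T)$ is a signed sum, not a nonnegative sum of monomials; a weight-preserving bijection onto the positive contributions $\beta^{|T'|-|\lambda'|}[x|b]^{T'}$ cannot exist until substantial cancellations are identified and proved. Already for $\lambda=(2,1)$, $f=(1,2)$, with the unique $T$ one gets seven terms of which six cancel in pairs, leaving the single $T'$ contribution; your sketch does not say which terms cancel against which, nor why the survivors index exactly $\FSVT(\lambda',f')$. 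The $P_T/Q_T$ split is also misaligned with the combinatorics: the relevant unit is not ``all $f_1$'s'' versus ``all $(f_1{+}1)$'s'' but the column-pairs (the $B_j$'s, which are individually symmetric) and the lone-value row segments (the $A_i$'s), and without regrouping along this ribbon the cancellation bookkeeping is intractable in general. Until that bookkeeping is carried out, part (2) is not proved.
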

\begin{proof}
(1) First observe that the entries of the determinant are symmetric in $x_{f_1}$ and $x_{f_1+1}$ except the ones on the first row, since $f_1<f_2$. We consider the cofactor expansion of $\tG_{\lambda,f}(x|b)$ with respect to the first row: let $\Delta_{i,j}$ be the cofactor of the $(i,j)$ entry and we have
\[
\tG_{\lambda,f} = \sum_{j=1}^r (-1)^{1+j}\Delta_{1,j}\sum_{s=0}^{\infty}\binom{1-j}{s}\beta^s \calG_{\lambda_1-1+j+s}^{[f_1,f_1+\lambda_1-1]}.
\]
Applying $\pi_{f_1}$ to this expansion, then we obtain
\begin{eqnarray*}
\pi_{f_1}\tG_{\lambda,f} 
&=&  \sum_{j=1}^r (-1)^{1+j}\Delta_{1,j}\sum_{s=0}^{\infty}\binom{1-j}{s}\beta^s\pi_{f_1}\left(\calG_{\lambda_1-1+j+s}^{[f_1,f_1+\lambda_1-1]}\right)\\
&=&  \sum_{j=1}^r (-1)^{1+j}\Delta_{1,j}\sum_{s=0}^{\infty}\binom{1-j}{s}\beta^s\calG_{\lambda_1-2+j+s}^{[f_1+1,f_1+\lambda_1-1]},
\end{eqnarray*}
in the view of Lemma \ref{lem1} (2) and Lemma \ref{lem2}. The last expression is the cofactor expansion of $\tG_{\lambda',f'}$, and thus we obtain the desired formula.

(2) Let $t:=f_1$ and $t':=f_1+1$. Following \cite{MatsumuraFlagged}, we define an equivalence relation $\sim$ on $\FSVT(\lambda,f)$ as follows. Two tableaux $T_1,T_2 \in \FSVT(\lambda,f)$ are equivalent if
\begin{itemize}
\item[(i)] the boxes containing either $t$ or $t'$ in $T_1$ coincide with the ones in $T_2$; 
\item[(ii)] If each box $(1,\lambda_1)$ in $T_1$ and $T_2$ contains $t$, then both of them contain only $t$ or both of them contain entries other than $t$.
\end{itemize}
Let $\scA$ be an equivalence class for $\FSVT(\lambda,f)$. The configuration of $t$ and $t'$ for the tableaux in $\scA$ are depicted as in Figure 1. 
\setlength{\unitlength}{0.5mm}
\begin{center}
\begin{picture}(210,100)\thicklines
\put(0,90){\line(0,-1){50}}\put(0,50){\line(0,-1){50}}\put(190,90){\line(1,0){5}}\put(0,90){\line(1,0){200}}\put(80,80){\line(1,0){120}}
\put(80,70){\line(1,0){90}}\put(80,60){\line(1,0){30}}\put(160,50){\line(1,0){30}}\put(140,30){\line(1,0){20}}\put(0,0){\line(1,0){140}}
\put(180,93){\small{$A_1$}}\put(150,93){\small{$B_1$}}\put(120,83){\small{$A_2$}}\put(90,83){\small{$B_2$}}\put(45,33){\small{$A_k$}}\put(20,33){\small{$B_k$}}
\put(175,83){\small{$t \dots t$}}\put(203,83){\small{$\cdots 1$-st row}}\put(145,83){\small{$t \dots t$}}\put(145,73){\small{$t'\dots t'$}}
\put(85,73){\small{$t \dots t$}}\put(85,63){\small{$t'\dots t'$}}\put(115,73){\small{$*\dots *$}}\put(178,75){\tiny{$m_1$}}
\put(150,65){\tiny{$r_1$}}\put(120,65){\tiny{$m_2$}}\put(90,55){\tiny{$r_2$}}\put(73,48){{$\cdot$}}\put(68,43){{$\cdot$}}\put(63,38){{$\cdot$}}
\put(39,23){\small{$*\dots *$}}\put(15,23){\small{$t\dots t$}}\put(13.5,13){\small{$t'\dots t'$}}\put(160,13){\small{$\cdots k$-th row}}
\put(40,16){\tiny{$m_k$}}\put(19.5,6){\tiny{$r_k$}}\put(10,30){\line(1,0){50}}\put(10,20){\line(1,0){50}}\put(10,10){\line(1,0){27}}\put(10,30){\line(0,-1){20}}
\put(37,30){\line(0,-1){20}}\put(60,30){\line(0,-1){10}}\put(200,90){\line(0,-1){10}}\put(170,90){\line(0,-1){20}}\put(140,90){\line(0,-1){20}}\put(110,80){\line(0,-1){20}}
\put(80,80){\line(0,-1){20}}\put(190,80){\line(0,-1){30}}\put(160,50){\line(0,-1){20}}\put(140,30){\line(0,-1){30}}
\end{picture}\\
Figure 1.
\end{center}
The one row rectangle $A_1$ on the first row consists of $m_1$ boxes with entries $t$. Each one-row rectangle $A_i\ (2\leq i\leq k)$ with $*$ consists of $m_i$ boxes and each box contains $t$ or $t'$ or both so that the total number of entries $t$ and $t'$ in $A_i$ is $m_i$ or $m_i+1$. Each two-row rectangle $B_j \ (1\leq j\leq k)$ consists of $r_i$ columns with $t$ on the first row and $t'$ on the second. Note that $m_i$ and $r_i$ may be $0$ so that the rectangles in Figure 1 may not be connected. 
Let us write
\[
\sum_{T\in \scA} \beta^{|T|-|\lambda|} [x|b]^T = R(\scA) \left(\prod_{i=1}^k R(A_i)\right) \left(\prod_{j=1}^k R(B_j)\right),
\]
where $R(\scA)$ is the polynomials contributed from the entries other than $t$ and $t'$ and $R(A_i)$ and $R(B_j)$ are the polynomials contributed from the entries $t$ and $t'$ in $A_i$ and $B_j$ respectively. It is easy to see that $R(\scA)$ and $R(B_j)\ (1\leq j\leq k)$ are symmetric in $x_t$ and $x_{t'}$. Moreover, in the view of Lemma \ref{lem3} and Remark \ref{rem2}, $R(A_i) \ (2\leq i \leq k)$ are also symmetric in $x_t$ and $x_{t'}$. We denote the product of these parts symmetric in $x_t$ and $x_{t'}$ by $R'(\scA)$:
\[
R'(\scA):= R(\scA) \left(\prod_{i=2}^k R(A_i)\right) \left(\prod_{j=1}^k R(B_j)\right).
\]
Let $M(T):=\beta^{|T|-|\lambda|} [x|b]^T$. By Lemma \ref{lem1} (2), we have
\[
\pi_t\left(\sum_{T\in \scA} M(T)\right) = \pi_t\left(R(A_1)\right)\cdot R'(\scA), 
\ \ \ \ \ R(A_1)=\prod_{i=1}^{m_1}(x_t\oplus b_{t-1+\lambda_1-m_1+i}).
\]
If $m_1=0$, then $r_1=0$ and $R(A_1)=1$ so that
\begin{equation}\label{eq1}
\pi_t\left(\sum_{T\in \scA} M(T)\right) =-\beta R'(\scA).
\end{equation}
If $m_1=1$, then
\begin{equation}\label{eq2}
\pi_t\left(\sum_{T\in \scA} M(T)\right) =R'(\scA).
\end{equation}
If $m_1\geq2$, then
\begin{eqnarray}
\pi_t\left(\sum_{T\in \scA} M(T)\right)
\!\!\!\!&=&\!\!\!\!\left\{\sum_{v=0}^{m_1-1}\left(\prod_{i=1}^v(x_t\oplus b_{t-1+\lambda_1-m_1+i})\prod_{i=v+1}^{m_i-1}(x_{t'}\oplus b_{t'-1+\lambda_1-m_1+i})\right)\right. \label{eq3}\\
&&\!\!+\left.\beta\sum_{v=1}^{m_1-1}\left(\prod_{i=1}^{v}(x_t\oplus b_{t-1+\lambda_1-m_1+i})\prod_{i=v}^{m_i-1}(x_{t'}\oplus b_{t'-1+\lambda_1-m_1+i})\right)\right\}R'(\scA).\nonumber
\end{eqnarray}

We decompose $\FSVT(\lambda,f)/\!\!\!\sim$ into the subsets $\calF_1,\dots,\calF_4$ of consisting of equivalence classes with configurations satisfying conditions (1) $m_1=0$, (2) $m_1=1$ and $r_1=0$, (3) $m_1=1$ and $r_1\neq 0$, (4) $m_1\geq2$, respectively:
\[
\FSVT(\lambda,f)/\!\!\sim\ =\calF_1\sqcup\calF_2\sqcup\calF_3\sqcup\calF_4.
\]
Let $\widetilde{\calF_2}$ be the subset of $\calF_2$ consisting of equivalence classes with configurations such that the box $(1,\lambda_1)$ contains entries other than $t$. Consider $[\calF_i]:=\{T\ |\ T\in\scA\in\calF_i\}$ and also $[\widetilde{\calF_i}]:=\{T\ |\ T\in\scA\in\widetilde{\calF_i}\}$. First we claim that
\[
\pi_t\left(\sum_{T\in[\calF_1]\sqcup[\widetilde{\calF_2}]} M(T)\right) =0.
\]
There is an obvious bijection from $[\calF_1]$ to $[\widetilde{\calF_2}]$ sending $T$ to the tableau $\widetilde{T}$ obtained by inserting $t$ to the box $(1,\lambda_1)$. This also induces a bijection from $\calF_1$ to $\widetilde{\calF_2}$ ( say, it maps $\scA$ to $\widetilde{\scA}$). Under this bijection, we have
\[
\sum_{\widetilde{T}\in \widetilde{\scA}} M(\widetilde{T}) =\sum_{T\in \scA} M(\widetilde{T}) 
= \beta (x_t\oplus b_{t-1+\lambda_1}) \sum_{T\in \scA} M(T).
\]
Lemma \ref{lem1}, (\ref{eq1}) and (\ref{eq2}) imply that 
\[
\pi_t\left(\sum_{\widetilde{T}\in\widetilde{\scA}}M(\widetilde{T})\right)
=-\beta^2R'(\scA)(x_t\oplus b_{t-1+\lambda_1})
+\beta \sum_{T\in \scA} M(T)
+\beta^2 (x_t\oplus b_{t-1+\lambda_1})R'(\scA) = \beta \sum_{T\in \scA} M(T).
\]
Thus we obtain
\[
\pi_t\left(\sum_{T\in[\calF_1]\sqcup[\widetilde{\calF_2}]}M(T)\right)=\sum_{\scA\in\calF_1}\left\{\pi_t\left(\sum_{T\in\scA}M(T)\right)+\pi_t\left(\sum_{\widetilde{T}\in \widetilde{\scA}}M(\widetilde{T})\right)\right\}=0.
\]
As a consequence, we have
\begin{equation}\label{eq4}
\pi_t\left(G_{\lambda,f}(x|b)\right)
=\pi_t\left(\sum_{T\in[\calF_2]\backslash[\widetilde{\calF_2}]\sqcup[\calF_3]\sqcup[\calF_4]}M(T)\right).
\end{equation}

 Now it remains to show that the right hand side of (\ref{eq4}) coincides with $G_{\lambda',f'}$. Define an equivalence relation $\sim$ in $\FSVT(\lambda',f')$ by the condition (i). For an arbitrary equivalence class $\scA'$ for  $\FSVT(\lambda',f')$, describe its configuration of $t$ and $t'$ as in Figure 2. Similarly to Figure 1, $A_i\ (2\leq i\leq k)$ is a rectangle consisting of $m_i$ boxes with entries $t$, $t'$ or both of them, $B_j\ (1\leq j\leq k)$ is a two-row rectangle with $r_j$ columns with $t$ on the first row and $t'$ on the second. The right-most rectangle $A_1$ has $m_1'$ boxes with entries $t$, $t'$ or both of them. 
\begin{center}
\setlength{\unitlength}{0.5mm}\begin{picture}(210,100)\thicklines\put(0,90){\line(0,-1){50}}\put(0,50){\line(0,-1){50}}\put(190,90){\line(1,0){0}}\put(0,90){\line(1,0){190}}\put(80,80){\line(1,0){110}}\put(80,70){\line(1,0){90}}\put(80,60){\line(1,0){30}}\put(160,50){\line(1,0){30}}\put(140,30){\line(1,0){20}}\put(0,0){\line(1,0){140}}\put(177,93){\small{$A_1$}}\put(150,93){\small{$B_1$}}\put(120,83){\small{$A_2$}}\put(90,83){\small{$B_2$}}\put(45,33){\small{$A_k$}}\put(20,33){\small{$B_k$}}\put(172,83){\small{$* \dots *$}}\put(203,83){\small{$\cdots 1$-st row}}\put(145,83){\small{$t \dots t$}}\put(145,73){\small{$t'\dots t'$}}\put(85,73){\small{$t \dots t$}}\put(85,63){\small{$t'\dots t'$}}\put(115,73){\small{$*\dots *$}}\put(177,75){\tiny{$m'_1$}}\put(150,65){\tiny{$r_1$}}\put(120,65){\tiny{$m_2$}}\put(90,55){\tiny{$r_2$}}\put(73,48){{$\cdot$}}\put(68,43){{$\cdot$}}\put(63,38){{$\cdot$}}\put(40,23){\small{$*\dots *$}}\put(15,23){\small{$t\dots t$}}\put(13.5,13){\small{$t'\dots t'$}}\put(160,13){\small{$\cdots k$-th row}}\put(45,16){\tiny{$m_k$}}\put(19.5,6){\tiny{$r_k$}}\put(10,30){\line(1,0){50}}\put(10,20){\line(1,0){50}}\put(10,10){\line(1,0){27}}\put(10,30){\line(0,-1){20}}\put(37,30){\line(0,-1){20}}\put(60,30){\line(0,-1){10}}\put(190,90){\line(0,-1){10}}\put(170,90){\line(0,-1){20}}\put(140,90){\line(0,-1){20}}\put(110,80){\line(0,-1){20}}\put(80,80){\line(0,-1){20}}\put(190,80){\line(0,-1){30}}\put(160,50){\line(0,-1){20}}\put(140,30){\line(0,-1){30}}\end{picture}\\
Figure 2.
\end{center}
 
We decompose $\FSVT(\lambda',f')/\!\!\!\sim$ into the subsets $\calF'_2,\calF'_3, \calF'_4$ consisting of equivalence classes with configurations satisfying conditions (2) $m'_1=0$ and $r_1=0$, (3) $m'_1=0$ and $r_1\neq 0$, (4) $m'_1\geq1$, respectively:
\[
\FSVT(\lambda,f)/\!\!\sim\ =\calF'_2\sqcup\calF'_3\sqcup\calF'_4.
\]
As before, let $[\calF'_i]:=\{T'\ |\ T'\in\scA'\in\calF_i'\}$ and then we have
\begin{equation}\label{eq5}
G_{\lambda',f'}(x|b)=\sum_{T'\in[{\calF'_2}]\sqcup[{\calF'_3}]\sqcup[\calF'_4]}M(T')
\end{equation}
There are bijections $\calF_2\backslash\widetilde{\calF_2} \to \calF_2'$, $\calF_3 \to \calF_3'$, and $\calF_4\to \calF_4'$, each of which sends an equivalence class $\scA$ to $\scA'$ whose configuration of $t$ and $t'$ is obtained from the one for $\scA$ by erasing the box $(1,\lambda_1)$ and replacing $t$ with $*$ in the rectangle $A_1$. Under these bijections, we find from (\ref{eq2}) and (\ref{eq3}) that
\[
\pi_t\left(\sum_{T\in \scA} M(T)\right) = \sum_{T'\in \scA'} M(T').
\]
Thus the right hand sides of (\ref{eq4}) and (\ref{eq5}) coincide and we conclude that $\pi_{f_1}G_{\lambda,f}=G_{\lambda',f'}$.
\end{proof}
\begin{thm}\label{mainthm}
For a flagged partition $(\lambda,f)$, we have $G_{\lambda,f}(x|b)=\tG_{\lambda,f}(x|b)$.
\end{thm}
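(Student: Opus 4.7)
The plan is to combine Propositions \ref{prop2} and \ref{prop3} into a double induction, outer on the length $r$ of $\lambda$ and inner on the first flag value $f_1$. These two propositions have been set up precisely so that $G_{\lambda,f}$ and $\tG_{\lambda,f}$ satisfy identical recursions under two operations: peeling off the first row of the tableau (when $f_1=1$), and a divided-difference step that recovers $(\lambda,f)$ from a flagged partition with strictly smaller $f_1$ but the same length.

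The base cases are immediate: if $\lambda$ is empty then both sides equal $1$ by convention, and if $f_1=0$ then the tableau side is $0$ by convention while the determinantal side vanishes by Remark \ref{rem1}. For the inductive step, I would distinguish two cases. If $f_1=1$, Proposition \ref{prop2} yields
\[
G_{\lambda,f}=\calG_{\lambda_1}^{[1,\lambda_1]}\cdot G_{\lambda',f'}^{\star}, \qquad \tG_{\lambda,f}=\calG_{\lambda_1}^{[1,\lambda_1]}\cdot \tG_{\lambda',f'}^{\star},
\]
with $(\lambda',f')$ of length $r-1$; the outer induction hypothesis then gives $G_{\lambda',f'}=\tG_{\lambda',f'}$, and the identity follows after starring and multiplying. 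If $f_1\geq 2$, set $\tilde\lambda=(\lambda_1+1,\lambda_2,\dots,\lambda_r)$ and $\tilde f=(f_1-1,f_2,\dots,f_r)$; this flagged partition has the same length $r$ but strictly smaller first flag, so by the inner induction hypothesis $G_{\tilde\lambda,\tilde f}=\tG_{\tilde\lambda,\tilde f}$. Moreover $\tilde\lambda_1=\lambda_1+1>\lambda_2=\tilde\lambda_2$ and $\tilde f_1=f_1-1<f_1\leq f_2=\tilde f_2$, so Proposition \ref{prop3} applies to $(\tilde\lambda,\tilde f)$, and applying $\pi_{f_1-1}$ to both sides of the inductive identity yields $G_{\lambda,f}=\tG_{\lambda,f}$.

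The scheme is well-founded because every step either decreases $r$ (via Proposition \ref{prop2}) or keeps $r$ fixed while strictly decreasing $f_1$ (via the inverted use of Proposition \ref{prop3}); lexicographic ordering on $(r,f_1)$ suffices. The main obstacle I anticipate is the boundary case $r=1$, where the hypotheses $\lambda_1>\lambda_2$ and $f_1<f_2$ of Proposition \ref{prop3} refer to absent indices $\lambda_2$ and $f_2$. I would handle this by checking that the proof of Proposition \ref{prop3} goes through verbatim when $r=1$: on the determinantal side the determinant reduces to the single entry $\calG_{\lambda_1}^{[f_1,f_1+\lambda_1-1]}$, so Lemma \ref{lem2} directly gives the claim; on the tableau side the combinatorial argument collapses to the degenerate configuration with $k=1$ and $r_1=0$, so the decomposition into $\calF_1,\dots,\calF_4$ in the proof of Proposition \ref{prop3}(2) still produces the needed bijections, only with no two-row rectangles $B_j$ to track.
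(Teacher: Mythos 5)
Your proof is correct and takes essentially the same approach as the paper: the inductive step is identical (Proposition \ref{prop2} when $f_1=1$, Proposition \ref{prop3} applied to the precursor $(\lambda_1+1,\lambda_2,\dots;f_1-1,f_2,\dots)$ when $f_1>1$), the only difference being that you induct on $(r,f_1)$ lexicographically while the paper inducts on $|f|=\sum f_i$, and you bottom out at the empty partition via Proposition \ref{prop2} whereas the paper invokes Proposition \ref{prop1} directly at $|f|=1$. Your observation that the $r=1$ instance of Proposition \ref{prop3} needs a word (since its stated hypotheses refer to $\lambda_2,f_2$) is a legitimate gap in the paper's exposition, and your proposed resolution is correct.
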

\begin{proof}
We prove the claim by induction on $|f|=f_1+\cdots + f_r$. With the help of Proposition \ref{prop1}, \ref{prop2} and \ref{prop3}, the proof is parallel to the one in \cite{MatsumuraFlagged}. If $|f|=1$, then $\lambda=(\lambda_1)$ and $f=(1)$. Thus it follows from Proposition \ref{prop1}. Suppose that $|f|>1$. We prove in two cases: $f_1=1$ or $f_1>1$. If $f_1=1$, then we can apply Proposition \ref{prop2} to both $\tG_{\lambda,f}$ and $G_{\lambda,f}$. The right hand sides of the resulting formulas coincide by the induction hypothesis, and thus the claim holds. If $f_1>1$, then Proposition \ref{prop3} implies that $\pi_{g_1}\tG_{\mu,g}=\tG_{\lambda,f}$ and $\pi_{g_1}G_{\mu,g}=G_{\lambda,f}$ where $\mu=(\lambda_1+1,\lambda_2,\dots,\lambda_r)$ and $g=(f_1-1,f_2,\dots,f_r)$. The left hand sides of these equalities coincide by the induction hypothesis, and thus the claim holds.
\end{proof}
\section{Vexillary double Grothendieck polynomials}
In this section, we prove that the double Grothendieck polynomials associated to vexillary permutations are in fact factorial Grothendieck polynomials (Theorem \ref{thm1}), giving an alternative proof to the corresponding results in \cite{KnutsonMillerYong} and \cite{HudsonMatsumura2}. Moreover we show that any factorial Grothendieck polynomial can be obtained from a product of certain linear polynomials by applying a sequence of divided difference operators (Theorem \ref{thm2}).

The {\it double Grothendieck polynomials} were introduced by Lascoux and Sch\"utzenberger \cite{Lascoux1}, \cite{LascouxSchutzenberger3}. 
For any permutation $w\in S_n$, we define the associated double Grothendieck polynomial $\frakG_w=\frakG_w(x|b)$ as follows. Let $w_0$ be the longest element of the symmetric group $S_n$. We set
\[
\frakG_{w_{0}}=\prod_{i+j\leq n}(x_i\oplus b_j).
\]
For an element $w\in S_n$ such that $\ell(w)<\ell(w_0)$, we can choose a simple reflection $s_i \in S_n$ such that  $\ell(ws_i)=\ell(w)+1$. Here $\ell(w)$ is the length of $w$. We then define
\[
\frakG_w=\pi_i(\frakG_{ws_i}).
\]
The polynomial $\frakG_w$ is defined independently from the choice of such $s_i$, since the divided difference operators satisfy the Coxeter relations. In this point of view, we can write $\frakG_w=\pi_v \frakG_{w_0}$ with $v=w_0w$ and $\pi_v=\pi_{i_k}\cdots \pi_{i_1}$ where $v=s_{i_1}\cdots s_{i_k}$ is a reduced expression.

A permutation $w\in S_n$ is called {\it vexillary} if it is $2143$-avoiding, {\it i.e.} there is no $a<b<c<d$ such that $w(b)<w(a)<w(d)<w(c)$. We briefly recall how to obtain a flagged partition from a vexillary permutation. We follow \cite{FlagsFulton} and \cite{KnutsonMillerYong} ({\it cf.} \cite{HudsonMatsumura2}). For each $w\in S_n$, let $r_w$ be the \emph{rank function} of $w \in S_n$ defined by $r_w(p,q) := \sharp\{ i\leq p  |\ w(i) \leq q\}$ for $1\leq p,q \leq n$. The {\it diagram} $D(w)$ of $w$ is defined as
\[ 
D(w):=\{(p,q)\in \{1,\dots,n\}\times \{1,\dots,n\} \ |\ w(p) >q, \ \mbox{and} \ w^{-1}(q)>p\}.
\]
The \emph{essential set} $\Ess(w)$ is the subset of $D(w)$ given by
\[
\Ess(w):=\{(p,q) \ |\ (p+1,q), (p,q+1) \not\in D(w)\}.
\]
If $w$ is vexillary, we can choose a {\it flagging set} of $w$ (cf. \cite{HudsonMatsumura2}), which is a subset $\{(p_i,q_i),i=1,\dots,r\}$ of $\{1,\dots,n\}\times \{1,\dots,n\}$ containing $\Ess(w)$ and satisfying 
\begin{eqnarray}
&&p_1\leq p_2 \leq \cdots \leq p_r,  \ \ \ q_1\geq q_2 \geq \cdots \geq q_r, \label{flagging1}\\
&&p_i - r_w(p_i,q_i) = i, \ \ \ \forall i=1,\dots,r. \label{flagging2} 
\end{eqnarray}
An associated flagged partition $(\lambda(w),f(w))$ of length $r$ is given by setting $f_i(w):=p_i$ and $\lambda(w)_i= q_i-p_i +i$ for $i=1,\dots,r$. We remark that the set $\FSVT(\lambda(w),f(w))$ depends only on $w$ but not on a choice of a flagging set. 
\begin{exm}
A permutation $w\in S_n$ is {\it dominant} if $D(w)$ is a Young diagram of a partition and the values of $r_w$ on $D(w)$ are zero. Such permutation is vexillary, and in this case, $\lambda(w)$ is the partition whose Young diagram is $D(w)$ and the flagging is $f(w)=(1,2,\cdots,r)$ where $r$ is the length of $\lambda(w)$. 
\end{exm}

The following theorem was obtained in \cite{KnutsonMillerYong} and \cite{HudsonMatsumura2}. We given an alternative proof.
\begin{thm}\label{thm1}
If $w\in S_n$ is vexillary, then we have $\frakG_w=G_{\lambda(w),f(w)} (=\tG_{\lambda(w),f(w)})$.
\end{thm}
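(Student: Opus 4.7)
The plan is to establish $\frakG_w = G_{\lambda(w),f(w)}$ by induction on $\ell(w_0)-\ell(w)$, matching the recursion $\frakG_w = \pi_i(\frakG_{ws_i})$ (valid whenever $\ell(ws_i)=\ell(w)+1$) with the operator identities packaged in Propositions \ref{prop2}--\ref{prop3}. Throughout, Theorem \ref{mainthm} allows me to interchange $G_{\lambda,f}$ with $\tG_{\lambda,f}$ as needed.

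The base case $w=w_0$ is direct. The longest element is dominant, with $D(w_0)=\{(p,q):p+q\leq n\}$, so $\lambda(w_0)=(n-1,n-2,\ldots,1)$ and $f(w_0)=(1,2,\ldots,n-1)$. Strict column-increase together with the flagging restriction forces the unique tableau of $\FSVT(\lambda(w_0),f(w_0))$ to place $i$ in every box of row $i$, with no set-valued freedom, giving
\[
G_{\lambda(w_0),f(w_0)} \;=\; \prod_{(i,j)\in\lambda(w_0)}(x_i\oplus b_j) \;=\; \prod_{i+j\leq n}(x_i\oplus b_j) \;=\; \frakG_{w_0}.
\]

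For the induction step, given a vexillary $w\neq w_0$, I would select a simple reflection $s_i$ with $\ell(ws_i)=\ell(w)+1$ so that $w':=ws_i$ is again vexillary and the flagged partitions $(\lambda(w),f(w))$ and $(\lambda(w'),f(w'))$ differ by a single elementary move: for some row index $j$ with $\lambda_j(w)>\lambda_{j+1}(w)$, $f_{j-1}(w)<f_j(w)$, and $f_j(w)<f_{j+1}(w)$ (boundary conventions $f_0:=0$, $f_{r+1}:=\infty$, $\lambda_{r+1}:=0$), the move sets $\lambda_j(w')=\lambda_j(w)+1$ and $f_j(w')=f_j(w)-1=i$, all other entries unchanged. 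When $f_1(w)=1$ and no such row-$j$ move is available, Proposition \ref{prop2} instead lets me factor off the first row and reduce to a vexillary permutation on fewer variables. Existence of the reflection and vexillariness of $w'$ are read off from a standard analysis of the rank function $r_w$ and the essential set $\Ess(w)$, leveraging $2143$-avoidance.

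Once such $s_i$ is identified, the induction closes via
\[
\frakG_w \;=\; \pi_i(\frakG_{w'}) \;=\; \pi_i\bigl(G_{\lambda(w'),f(w')}\bigr) \;=\; G_{\lambda(w),f(w)},
\]
where the last equality is a row-$j$ generalization of Proposition \ref{prop3}. This generalization is established by running the proof of Proposition \ref{prop3} verbatim with ``row $1$'' replaced by ``row $j$'': the strict inequalities $f_{j-1}<f_j<f_{j+1}$ ensure that every determinantal entry in rows other than $j$ is symmetric in $x_{f_j}$ and $x_{f_j+1}$ (since $\calG_m^{[p,q]}$ is symmetric in $x_1,\ldots,x_p$), and the flagging confines the tableau values $f_j$ and $f_j+1$ to rows $j$ and $j+1$, so the cofactor-expansion argument and the equivalence-class argument carry over unchanged. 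The principal obstacle is the combinatorial half, namely producing the reflection $s_i$ and checking that $w'$ is still vexillary with the predicted flagged partition; the algebraic closure is a transparent adaptation of Propositions \ref{prop2}--\ref{prop3}.
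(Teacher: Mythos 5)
Your proposal captures the base case and the broad inductive strategy, but it has a genuine gap at the step that turns out to be the hardest in the paper, and it also introduces an unnecessary (and unproven) generalization.

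First, the unnecessary part. You invoke a ``row-$j$ generalization of Proposition~\ref{prop3}'' and claim it carries over verbatim from the row-$1$ proof. The paper never needs such a generalization: by always taking $s_i$ at $i=d-1$ where $d$ is the \emph{leftmost} descent of $w$, the elementary move between $(\lambda(w),f(w))$ and $(\lambda(ws_{d-1}),f(ws_{d-1}))$ is confined to row $1$, so Proposition~\ref{prop3} applies exactly as stated. Even if a row-$j$ version were true, asserting it without proof is a loose end; better to organize the induction so it is not needed.

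Second, and more seriously, your handling of the case $f_1(w)=1$ is incomplete. You say ``when $f_1(w)=1$ and no such row-$j$ move is available, Proposition~\ref{prop2} instead lets me factor off the first row and reduce to a vexillary permutation on fewer variables.'' This describes only the paper's Case~3, where $w(1)=n$ and hence $(w(2),\dots,w(n))$ is genuinely a vexillary permutation of $\{1,\dots,n-1\}$, so the first-row factorization of Proposition~\ref{prop2} lines up with a one-row-shorter permutation. But there is a distinct subcase, the paper's Case~2, where $d=1$ yet $m:=w(1)<n$. There $f_1(w)=1$ still holds and Proposition~\ref{prop2} is formally applicable on the tableau side, but the reduced flagged partition $(\lambda',f')$ is \emph{not} the flagged partition of $(w(2),\dots,w(n))$, which is not even a permutation of $\{1,\dots,n-1\}$ (it contains $n$). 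No single divided difference $\pi_i$ with $\ell(ws_i)=\ell(w)+1$ produces a vexillary $w'$ whose flagged partition differs from $w$'s by an elementary row move, so the clean recursion $\frakG_w=\pi_i(\frakG_{ws_i})=\pi_i(G_{\lambda(w'),f(w')})$ you sketch simply does not fire here. The paper handles this by an entirely different mechanism: it introduces the dominant permutation $u=(m+1,m,m+2,\dots,n,m-1,\dots,1)$, writes $w=u\,s_1 s_{i_k}\cdots s_{i_1}$ reducedly with all $i_j\geq 2$, uses $\pi_1\frakG_u=(x_1\oplus b_m)^{-1}\frakG_u$, and commutes the scalar factor through $\pi_{i_1}\cdots\pi_{i_k}$ to deduce $\frakG_w=(x_1\oplus b_m)^{-1}\frakG_{s_mw}$, matching the analogous identity on the $G$-side. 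Until your argument addresses this case, it does not establish the theorem.

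Finally, your assertion that ``existence of the reflection and vexillariness of $w'$ are read off from a standard analysis of the rank function'' is left entirely to the reader; the paper's explicit three-case split by the leftmost descent is precisely the substance of that analysis and should not be omitted.
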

\begin{proof}
We closely follow the argument in \cite{Wachs} and \cite{MatsumuraFlagged}. We prove the claim by induction on $(n, \ell(w_0)-\ell(w))$ with the lexicographic order.  For the longest element $w_0\in S_n$, we have $\lambda(w_0)=(n-1,n-2,\cdots,2,1)$ and $f(w_0)=(1,2,\cdots,n-1)$. Thus
\[
G_{\lambda(w_0),f(w_0)}=\prod_{i+j\leq n}(x_i\oplus b_j)= \frakG_{w_{0}}
\]
Suppose that $w\in S_0$ and $w\not=w_0, \id$. Let $d$ be the leftmost descent of $w$, {\it i.e.} $d$ is the smallest number such that $w(d)>w(d+1)$.

{\bf Case 1} ($d>1$). We observe that $w'=ws_{d-1}$ is vexillary and $\ell(w')>\ell(w)$. By the induction hypothesis and Proposition \ref{prop3}, we have
\[
\frakG_w=\pi_{d-1}\frakG_{w'}=\pi_{d-1}G_{\lambda(w'),f(w')}=G_{\lambda(w),f(w)}.
\]

{\bf Case 2} ($d=1$ and $m:=w(1)<n$). We observe that $w'=s_mw$ is vexillary and $l(w')=l(w)+1$.
Consider the dominant permutation
\[
u=(m+1,m,m+2,m+3,\cdots,n,m-1,m-2,\cdots,1).
\]
Since $w$ is vexillary, all numbers greater than $m$ appear in ascending order in $w$. Thus we can find integers $i_1,\dots, i_k$ greater than $1$, satisfying $w=us_1s_{i_k} \cdots s_{i_1}$ and $\ell(w)+1 + l = \ell(u)$. Together with $s_mu=us_1$, we have $\frakG_w=\pi_{i_1}\cdots\pi_{i_k}\pi_1\frakG_u$ and $\frakG_{w'}=\pi_{i_1}\cdots\pi_{i_k}\frakG_u$.
On the other hand, we see that $(\lambda(u)_1,\lambda(u)_2) = (m,m-1)$ and $(f(u)_1,f(u)_2)=(1,2)$ so that $\pi_1\frakG_u=(x_1\oplus b_m)^{-1}\frakG_u$. Since $i_1,\dots,i_k\geq 2$, we find that, by the induction hypothesis, 
\[
\frakG_w=\pi_{i_1}\cdots\pi_{i_k} \left((x_1\oplus b_m)^{-1}\frakG_u\right)=(x_1\oplus b_m)^{-1}\frakG'_w=(x_1\oplus b_m)^{-1}G_{\lambda(w'),f(w')}.
\]
Since $f(w')_1=f(w)_1=1$ and the flagged partitions of $w$ and $w'$ are identical except $m=\lambda(w')_1=\lambda(w)_1+1$, we have $G_{\lambda(w),f(w)}=(x_1 \oplus b_m)^{-1}G_{\lambda(w'),f(w')}$. This shows that $\frakG_w=G_{\lambda(w),f(w)}$.

{\bf Case 3} ($d=1$ and $w(1)=n$). We can find a reduced expression $s_{i_1}\cdots s_{i_k}$ of $w_0w$ where $i_1,\dots, i_k \geq 2$. Let $w'=(w(2),\dots, w(n))\in S_{n-1}$. Then we have
\[
\frakG_w=\prod_{i=1}^{n-1}(x_1\oplus b_i)\cdot \pi_{i_k}\pi_{i_{k-1}}\cdots\pi_{i_1}\left(\prod_{i+j\leq n-1}(x_{i+1}\oplus b_j)\right) = \prod_{i=1}^{n-1}(x_1\oplus b_i)\cdot \left(\frakG_{w'}\right)^{\star}
\]
where $\star$ replaces each $x_i$ by $x_{i+1}$. By the induction hypothesis and Proposition \ref{prop1}, we have
\[
\frakG_w = \calG_{n-1}^{[1,n-1]} \left(\frakG_{\lambda(w'),f(w')}\right)^{\star}
\]
Since $f(w)_1=1$ and $(f(w')_1,\cdots,f(w')_{n-1})=(f(w')_2-1,\cdots,f(w')_n-1)$, Proposition \ref{prop2} implies the claim.
\end{proof}
\begin{thm}\label{thm2}
Let $(\lambda,f)$ be a flagged partition of length $r$. Then we have
\[
G_{\lambda,f}=\pi_w\left(\prod_{i=1}^r\prod_{j=1}^{a_i}(x_i\oplus b_j)\right)
\]
where $a_i=\lambda_i+f_i-i$ for $i=1,\dots,r$ and 
\[
w=(s_rs_{r+1}\cdots s_{f_r-1})\cdots (s_2s_3\cdots s_{f_2}-1)(s_1s_2\cdots s_{f_1}-1).
\]
\end{thm}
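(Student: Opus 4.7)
The plan is to induct on $|f| := f_1 + \cdots + f_r$, writing $P_{\lambda,f} := \prod_{i=1}^{r}\prod_{j=1}^{a_i}(x_i\oplus b_j)$ and letting $w_{\lambda,f}$ denote the element appearing in the theorem. The base case $|f|=0$ forces $r=0$, and both sides equal $1$. For the inductive step, I split according to whether $f_1 = 1$ or $f_1 > 1$.

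\textbf{Case $f_1 = 1$.} The last block $s_1\cdots s_{f_1-1}$ in the expression for $w_{\lambda,f}$ is empty, so $\pi_{w_{\lambda,f}}$ is built only from operators $\pi_i$ with $i\geq 2$. Since $a_1=\lambda_1$, I factor $P_{\lambda,f} = \prod_{j=1}^{\lambda_1}(x_1\oplus b_j)\cdot \prod_{i=2}^{r}\prod_{j=1}^{a_i}(x_i\oplus b_j)$. The first factor involves only $x_1$ and $b_j$'s, hence is symmetric in every pair $(x_i,x_{i+1})$ with $i\geq 2$, so Lemma \ref{lem1}(2) allows $\pi_{w_{\lambda,f}}$ to slip past it. Setting the truncation $(\lambda',f') := ((\lambda_2,\dots,\lambda_r),(f_2-1,\dots,f_r-1))$ of length $r-1$, a direct comparison shows $a'_i = a_{i+1}$ and that the word for $w_{\lambda,f}$ is the word for $w_{\lambda',f'}$ with every index raised by $1$. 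Using the intertwining rule $\pi_{i+1}(H^{\star}) = (\pi_i H)^{\star}$ for any polynomial $H$, I obtain $\pi_{w_{\lambda,f}}(P_{\lambda,f}) = \prod_{j=1}^{\lambda_1}(x_1\oplus b_j)\cdot(\pi_{w_{\lambda',f'}}P_{\lambda',f'})^{\star}$. Since $|f'|=|f|-r<|f|$, the inductive hypothesis applies, and combining with Propositions \ref{prop1} and \ref{prop2} gives $\pi_{w_{\lambda,f}}(P_{\lambda,f}) = \calG_{\lambda_1}^{[1,\lambda_1]}\cdot G_{\lambda',f'}^{\star} = G_{\lambda,f}$.

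\textbf{Case $f_1 > 1$.} Set $(\bar\lambda,\bar f) := ((\lambda_1+1,\lambda_2,\dots,\lambda_r),(f_1-1,f_2,\dots,f_r))$. Then $\bar a_i = a_i$ for all $i$, hence $P_{\bar\lambda,\bar f} = P_{\lambda,f}$, while a glance at the two words shows $w_{\lambda,f} = w_{\bar\lambda,\bar f}\cdot s_{f_1-1}$, giving $\pi_{w_{\lambda,f}} = \pi_{f_1-1}\circ \pi_{w_{\bar\lambda,\bar f}}$. Since $|\bar f| = |f|-1$, the inductive hypothesis gives $\pi_{w_{\bar\lambda,\bar f}}(P_{\bar\lambda,\bar f}) = G_{\bar\lambda,\bar f}$. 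The hypotheses of Proposition \ref{prop3} hold for $(\bar\lambda,\bar f)$, since $\bar\lambda_1 = \lambda_1+1 > \lambda_2 = \bar\lambda_2$ and $\bar f_1 = f_1-1 < f_2 = \bar f_2$ (using $f_1\leq f_2$), so $\pi_{f_1-1}(G_{\bar\lambda,\bar f}) = G_{\lambda,f}$, completing the induction.

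The main obstacle will be the index bookkeeping in the $f_1 = 1$ case: verifying that after extracting the $i=1$ row, both $w_{\lambda,f}$ and the residual product transform correctly under the shift $\star$ so that the intertwining $\pi_{w_{\lambda,f}}(H^{\star}) = (\pi_{w_{\lambda',f'}}H)^{\star}$ can be applied. Once this indexing is pinned down, both cases reduce mechanically to Propositions \ref{prop1}--\ref{prop3} and the inductive hypothesis.
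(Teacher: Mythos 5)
Your proof is correct and follows essentially the same inductive strategy as the paper's own proof of this theorem: induct on $|f|$, split into $f_1 = 1$ and $f_1 > 1$, and reduce to Propositions \ref{prop1}--\ref{prop3} via the intertwining of $\star$ with the divided difference operators. The only cosmetic difference is starting the induction at $|f|=0$ (empty partition) rather than $|f|=1$.
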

\begin{proof}
We prove by induction on $|f|$. If $|f|=1$, we see that $G_{\lambda,f}=x_1\oplus b_1$ and $w=\id$ so that the claim is trivial. Suppose that $|f|>1$. If $f_1=1$, let $\lambda'=(\lambda_2,\cdots,\lambda_r)$ and $f'=(f_2-1,\cdots,f_r-1)$. By Proposition \ref{prop2}, we have $G_{\lambda,f}=(G_{\lambda',f'})^{\star}\prod_{j=1}^{\lambda_1}(x_1\oplus b_j)$. By the induction hypothesis, we can write $(G_{\lambda',f'})^{\star} = \pi_w\left(\prod_{i=2}^r\prod_{j=1}^{a_i}(x_i\oplus b_j)\right)$. Since $s_1$ doesn't appear in the reduced expression of $w$, we obtain the desired formula. If $f_1>1$, since $\lambda'=(\lambda_1+1,\lambda_2,\cdots,\lambda_r),f'=(f_1-1,f_2,\cdots,f_r)$, Proposition \ref{prop3} and the induction hypothesis imply the claim:
\[
G_{\lambda,f}=\pi_{f_1-1}G_{\lambda',f'}=\pi_{f_1-1}\pi_{ws_{f_1-1}}\left(\prod_{i=1}^r\prod_{j=1}^{a_i}(x_i\oplus b_j)\right)=\pi_w\left(\prod_{i=1}^r\prod_{j=1}^{a_i}(x_i\oplus b_j)\right).
\]
This completes the proof.
\end{proof}

\bibliography{references}{}

\def\cprime{$'$}
\begin{thebibliography}{10}

\bibitem{Anderson2016}
{\sc {Anderson}, D.}
\newblock {K-theoretic Chern class formulas for vexillary degeneracy loci}.
\newblock {\em ArXiv e-prints\/} (Jan. 2017).

\bibitem{BuchLRrule}
{\sc Buch, A.~S.}
\newblock A {L}ittlewood-{R}ichardson rule for the {$K$}-theory of
  {G}rassmannians.
\newblock {\em Acta Math. 189}, 1 (2002), 37--78.

\bibitem{ChenLiLouck}
{\sc Chen, W. Y.~C., Li, B., and Louck, J.~D.}
\newblock The flagged double {S}chur function.
\newblock {\em J. Algebraic Combin. 15}, 1 (2002), 7--26.

\bibitem{DoubleGrothendieckFomin}
{\sc Fomin, S., and Kirillov, A.~N.}
\newblock The {Y}ang-{B}axter equation, symmetric functions, and {S}chubert
  polynomials.
\newblock {\em Discrete Math. 153}, 1-3 (1996), 123--143.
\newblock Proceedings of the 5th {C}onference on {F}ormal {P}ower {S}eries and
  {A}lgebraic {C}ombinatorics ({F}lorence, 1993).

\bibitem{GrothendieckFomin}
{\sc Fomin, S., and Kirillov, A.~N.}
\newblock Grothendieck polynomials and the {Y}ang-{B}axter equation.
\newblock In {\em Formal power series and algebraic combinatorics/{S}\'eries
  formelles et combinatoire alg\'ebrique}. DIMACS, Piscataway, NJ, sd,
  pp.~183--189.

\bibitem{FlagsFulton}
{\sc Fulton, W.}
\newblock Flags, {S}chubert polynomials, degeneracy loci, and determinantal
  formulas.
\newblock {\em Duke Math. J. 65}, 3 (1992), 381--420.

\bibitem{HIMN}
{\sc Hudson, T., Ikeda, T., Matsumura, T., and Naruse, H.}
\newblock Degeneracy loci classes in {$K$}-theory --- determinantal and
  {P}faffian formula.
\newblock {\em Adv. Math. 320\/} (2017), 115--156.

\bibitem{HudsonMatsumura2}
{\sc Hudson, T., and Matsumura, T.}
\newblock Vexillary degeneracy loci classes in {$K$}-theory and algebraic
  cobordism.
\newblock {\em European J. Combin. 70\/} (2018), 190--201.

\bibitem{KnutsonMillerYong2}
{\sc Knutson, A., Miller, E., and Yong, A.}
\newblock Tableau complexes.
\newblock {\em Israel J. Math. 163\/} (2008), 317--343.

\bibitem{KnutsonMillerYong}
{\sc Knutson, A., Miller, E., and Yong, A.}
\newblock Gr\"obner geometry of vertex decompositions and of flagged tableaux.
\newblock {\em J. Reine Angew. Math. 630\/} (2009), 1--31.

\bibitem{Lascoux1}
{\sc Lascoux, A.}
\newblock Anneau de {G}rothendieck de la vari\'et\'e de drapeaux.
\newblock In {\em The {G}rothendieck {F}estschrift, {V}ol.\ {III}}, vol.~88 of
  {\em Progr. Math.} Birkh\"auser Boston, Boston, MA, 1990, pp.~1--34.

\bibitem{LascouxSchutzenberger3}
{\sc Lascoux, A., and Sch{\"u}tzenberger, M.-P.}
\newblock Structure de {H}opf de l'anneau de cohomologie et de l'anneau de
  {G}rothendieck d'une vari\'et\'e de drapeaux.
\newblock {\em C. R. Acad. Sci. Paris S\'er. I Math. 295}, 11 (1982), 629--633.

\bibitem{Matsumura321}
{\sc {Matsumura}, T.}
\newblock {A tableau formula of double Grothendieck polynomials for
  $321$-avoiding permutations}.
\newblock {\em ArXiv e-prints\/} (Oct. 2017).

\bibitem{Matsumura2016}
{\sc Matsumura, T.}
\newblock An algebraic proof of determinant formulas of {G}rothendieck
  polynomials.
\newblock {\em Proc. Japan Acad. Ser. A Math. Sci. 93}, 8 (2017), 82--85.

\bibitem{MatsumuraFlagged}
{\sc {Matsumura}, T.}
\newblock {Flagged Grothendieck polynomials}.
\newblock {\em ArXiv e-prints\/} (Jan. 2017).

\bibitem{McNamara}
{\sc McNamara, P.~J.}
\newblock Factorial {G}rothendieck polynomials.
\newblock {\em Electron. J. Combin. 13}, 1 (2006), Research Paper 71, 40 pp.
  (electronic).

\bibitem{Wachs}
{\sc Wachs, M.~L.}
\newblock Flagged {S}chur functions, {S}chubert polynomials, and symmetrizing
  operators.
\newblock {\em J. Combin. Theory Ser. A 40}, 2 (1985), 276--289.

\end{thebibliography}
\bibliographystyle{acm}

%

\end{document}